\newcommand{\Xcomment}[1]{}
\newtheorem{theorem}{Theorem}[section]
\newtheorem{lemma}[theorem]{Lemma}
\newtheorem{prop}[theorem]{Proposition}
\makeatletter \@addtoreset{equation}{section} \makeatother
\def\Rset{{\mathbb R}}
\def\Ascr{{\cal A}}
\def\Mscr{{\cal M}}
\def\Qscr{{\cal Q}}
\newenvironment{proof}{\noindent{\bf Proof}~}%
{\hfill$\qed$\medskip}
\newenvironment{numitem1}{\refstepcounter{equation}\begin{enumerate}%
\item[(\thesection.\arabic{equation})]}{\end{enumerate}}
\def\qed{ \ \vrule width.1cm height.3cm depth0cm}
\begin{document}

\title{Higher Bruhat orders of types B and C }

 \author{Vladimir I.~Danilov
 \thanks{Central Institute of Economics and
Mathematics of the RAS, 47, Nakhimovskii Prospect, 117418 Moscow, Russia;
email: danilov@cemi.rssi.ru.}
 \and
Alexander V.~Karzanov
\thanks{Central Institute of Economics and Mathematics of
the RAS, 47, Nakhimovskii Prospect, 117418 Moscow, Russia; email:
akarzanov7@gmail.com. }
  \and
Gleb A.~Koshevoy
\thanks{Institute for Information Transmission Problems of
the RAS, 19, Bol'shoi Karetnyi per., 127051 Moscow, Russia, and HSE University;
email:koshevoyga@gmail.com. Supported in part by grant 22-41-02028 from the Russian Science Foundation.}
  }

\date{}

\maketitle

\renewcommand{\abstractname}{Abstract}

\begin{abstract}
\noindent We propose versions of higher Bruhat orders for types $B$ and $C$. This is based on a theory of higher Bruhat orders of type~A and their geometric interpretations (due to Manin--Shekhtman, Voevodskii--Kapranov, and Ziegler), and on our study of the so-called symmetric cubillages of cyclic zonotopes.
\end{abstract}

\section{Introduction}\label{intr}

In their study of Zamolodchikov's equations and a multi-dimensional generalization of Yang-Baxter's ones, Manin and Shekhtman~\cite{MS} introduced the notion of \emph{higher Bruhat orders} $B(n,d)$, which are posets (partially ordered sets) defined for any positive integers $n$ and $d\le n$. When $d=1$, this turns into the classical \emph{weak (Bruhat) order} on the set of permutations on the $n$-element set $[n]:=\{1,\ldots,n\}$ (usually equipped with the natural order $1<2<\cdots<n$). In general, $B(n,d+1)$ is obtained by a factorization from the set of maximal chains in $B(n,d)$. Note that the notion of a weak order can be defined for any Coxeter group, and the higher Bruhat orders due to Manin and Shekhtman can be attributed as being of type $A$. 

In this paper, continuing our study of symmetric structures started in \cite{DKK2, DKK3}, we propose versions of higher Bruhat orders for types $B$ and $C$.

The main idea of our construction is briefly as follows. Kapranov and Voevodsky \cite{KV} showed that one can express $B(n,d)$ as the poset ${\mathbf Q}(n,d)$ of fine zonotopal tilings, or \emph{cubillages} (in terminology of~\cite{KV}), in the \emph{cyclic zonotope} $Z(n, d)$. Here the order structure is given via the so-called \emph{flips}, transformations that turn one cubillage into another. Since Coxeter groups of types $B$ and $C$ can be understood as groups of some symmetric permutations (see~\cite[Sect.~6]{eln}), it is natural to try to construct higher Bruhat orders as posets on symmetric cubillages. Here the symmetry is given by the involution $i \mapsto i^\circ :=n+1-i$ on the set $[n]$, which we call \emph{the color involution}.

This paper is organized as follows. In Section~\ref{sec2} we recall basic definitions and review some facts on cubillages and related objects. Section~\ref{sec3} defines the set $\mathbf{SQ}(n,d)$ of symmetric cubillages in the cyclic zonotope $Z (n,d)$, Section~\ref{sec4} is devoted to certain geometric aspects related to such cubillages, and Section \ref{sec5} introduces the notion of (generalized) flips on them. This allows us to define the structure of an acyclic  digraph on $\mathbf{SQ}(n,d)$, giving rise to a poset on it. The structures of such posets are viewed differently for the even and odd cases of $n$. When $n$ is even, $n=2m$, the poset $\mathbf{SQ}(n,d)$ is attributed as a higher Bruhat order of \emph{type C} and denoted as $\mathbf{C}(m,d)$, whereas for $n$ odd, $n=2m+1$, it is attributed as that of \emph{type~B} and denoted as $\mathbf{B}(m,d)$. Note that for $d=1$, these posets coincide and represent the same weak order (simultaneously of types $C$ and $B$).

Each of the posets $\mathbf{C}(m,d)$ and $\mathbf{B}(m,d)$  has as a minimal element the so-called \emph{standard cubillage} $\mathcal Q^{st}(n',d)$ (where $n'$ is $2m$ or $2m+1$). We conjecture that this is the \emph{unique} minimal element in the poset (Conjecture~1 in Section~\ref{sec5}). Symmetrically, each of $\mathbf{C}(m,d)$ and $\mathbf{B}(m,d)$  has as a maximal element the so-called \emph{anti-standard cubillage} $\mathcal Q^{ant}(n',d)$, and accordingly, this cubillage is conjectured to be the unique maximal element there. We  verify this conjecture in three special cases expounded in propositions of  Section~\ref{sec6}. In particular, it is valid when $n$ is even and $d$ is odd. 

Sections \ref{sec7} and~\ref{sec8} contain appealing examples of the above posets and construct a ``reduction'' morphism $\textbf{B} (m, d)\to \textbf{C}(m,d)$, which looks rather nontrivial. We prove the connectedness of the fibers in this morphism. Section~\ref{sec9} is devoted to a relation to the higher Bruhat orders of type~A. Here we construct morphisms of the form $\textbf{C}(m,d) \to \textbf{A} (m,\lfloor d/2 \rfloor)$ and $\textbf{B} (m,d) \to \textbf{A} (m+1,\lfloor d/2\rfloor)$.

The concluding Section \ref{sec10} discusses maximal chains in the posets $\mathbf{C}(m,d)$ and $\mathbf{B}(m,d)$. Here we observe that the cases of $d$ even and odd behave differently. Namely, for $d$ odd, 
$\mathbf{C}(m,d+1)$ and  $\mathbf{B}(m,d+1)$ are obtained as factorizations from maximal chains in $\mathbf{C}(m,d)$ and $\mathbf{B}(m,d)$, respectively, while for $d$ even, the sets of maximal chains turn out to be rather poor. (Note that  the cases $d=1,2$ were considered in~\cite{SAV}.) Due to the former, the corresponding Zamolodchikov's equations for types $C$ and $B$ become well-motivated when the dimension is odd.

           
\section{Preliminaries. Basic facts on cubillages}\label{sec2}

In this section we give necessary definitions and recall basic facts on cubillages of cyclic zonotopes used in the paper. For details, see \cite{UMN-19}.

A \emph{zonotope} $Z([n],d)$ of dimension $d\ge 1$ (usually we will write simply $Z(n,d)$) is defined by use of $n\ge d$ \emph{guiding} vectors $\xi _1,\ldots,\xi _n$ in the space $\mathbb R ^{[d]}$ (with the unit base vectors $e_1,\ldots, e_d$) and is a polytope represented as the Minkowskii sum of segments $[0,\xi_i]$; this is formed by the points (vectors) $\sum_i a_i \xi _i$, where $0\le a_i\le 1$. We assume that in the $d\times n$ matrix formed by $\xi_1,\ldots,\xi_n$ as columns (in this order), all flag minors are strictly positive, in which case the guiding system $\xi$, as well as the zonotope, is called \emph{cyclic}. 
It is often convenient to take points $\xi _i$ located on the Veronese curve, by setting $\xi_i=(1, t_i,\ldots, t_i^{d-1})$ for $t_1<\cdots<t_n$. We interpret the elements of $[n]$ as \emph{colors}. When $n=d$, the zonotope $Z(d,d)$ (as well as any of its relevant shifts) is called a \emph{cube} for short. Using terminology of~\cite{UMN-19}, we call the zonotope $Z(d+1,d)$ a \emph{capsid}, and $Z(d+2, d)$ a \emph{barrel}.

A \emph{cubillage} is a subdivision  $\mathcal Q $ of the zonotope $Z=Z(n,d)$ into cubes, so that any two intersecting cubes share a common face and any facet ($(d-1)$-dimensional face) in the boundary of $Z$ is covered by one cube. 
We may understand $\mathcal Q $ either as the set of its cubes, or as the corresponding cubic complex. Each cube $C$ in $\mathcal Q $ is of the form $Z(D,d)$, where $D$ is a subset of $[n]$ of size $d$; we call $D$ the {\em type} of $C$. One shows that the correspondence $C\mapsto D$ establishes a bijection
between the set of cubes of a cubillage $\mathcal Q $  and the set $Gr([n],d)=\binom{[n]}{d}$ of subsets of $[n]$ of size $d$ (usually called the {\em discrete Grassmannian}). This bijection enables us to switch, when needed, from a geometric representation of cubillages to a combinatorial one.

A \emph{packet} (in terminology of~\cite{MS}) is meant to be a subset $F$  of size $d+1$ in $[n]$. It determines the family $Gr (F,d)$ in $Gr([n], d)$ consisting of $d$-element subsets of $F$. We refer to such a family as a \emph{stick} and assume that its members are ordered \emph{lexicographically}. For example, if $d=4$ and $F=12345$, then the corresponding stick consists of the ordered 5-tuple  
\[
1234 < 1235 < 1245 < 1345 < 2345.\]

\noindent(Hereinafter for a subset $\{a,b\ldots,c\}$ of $[n]$, we
use the shorter notation $ab\cdots c$.)
 \medskip

\noindent\textbf{Inversions.} 
An important representation of cubillages involves \emph{inversion systems}. In this way, each cubillage $\mathcal Q $ is associated with a certain subset $Inv (\mathcal Q )$ (of \emph{inversions}) in $Gr([n], d+1)$ which uniquely determines $\mathcal Q$. 

 \begin{figure}[htb]
\begin{center}
\includegraphics[scale=.4]{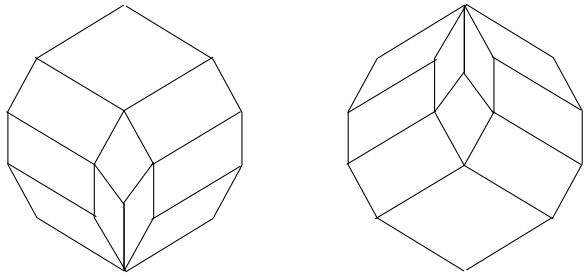}
\end{center}
\vspace{-0.4cm}
\caption{Standard (left)  and anti-standard (right) rhombus tilings of the zonogon $Z(5,2)$.}
 \label{fig1}
\end{figure}

Geometrically, the inversion system $Inv (\mathcal Q )$ can be  constructed as follows (for details, see \cite[Sect.~15]{UMN-19}). Represent a cubillage $\mathcal Q$ in $Z=Z(n,d)$ as the corresponding complex $\Mscr$ within the zonotope $\widetilde{Z}=Z(n, d+1)$ of the next dimension $d+1$ (using the natural bijection between the guiding vectors in $Z$ and $\widetilde Z$). This $\Mscr$, called an (abstract) \emph{membrane} in $ \widetilde{Z}$, divides this zonotope into two halves: before the membrane and after it (where ``before'' and ``after'' are positioned in the direction of the last base vector $e_{d+1}$ in $\mathbb R ^{d+1}$). The membrane $\mathcal M $ can always be embedded in some cubillage $\widetilde{\mathcal Q} $ of the zonotope $\widetilde{Z}$. Then $Inv(\mathcal Q )$ is just the set (system) of \emph{types} of the cubes of $\widetilde{\mathcal Q} $ that are located \emph{before} $\mathcal M $. (Although there may be many cubillages in $\widetilde{Z}$ containing  $\mathcal M $, the system $Inv(\mathcal Q )$ is defined uniquely.) For example, the empty inversion system gives the so-called \emph{standard} cubillage in $Z$; geometrically, it is represented (under the projection $\mathbb{R}^{[d+1]} \to \mathbb{R}^{[d]}$) by the membrane being the visible part (viz. the \emph{front} boundary) of  $\widetilde{Z}$. In turn, the system consisting of the entire $Gr ([n], d+1)$ gives the \emph{anti-standard} cubillage in $Z$, which is represented by the membrane being the invisible part (viz. the \emph{rear} boundary) of $ \widetilde Z$. See Fig.~\ref{fig1}.

The set $Inv (\mathcal Q )$ of inversions of a cubillage $\mathcal Q$ in $Z(n,d)$ (equivalently, the set $Inv (\Mscr)$ of inversions of the corresponding membrane $\Mscr$ in $Z(n,d+1)$) has the following important characterization:
  \begin{description} 
\item[{\emph Ziegler's condition}]\cite{zieg}: for any packet  $F$ of size $d+2$ in $[n]$, the intersection of $Inv (\mathcal Q )$ 
with the stick $Gr(F, d+1)$ is either an initial or a final interval of the stick.
  \end{description}
 
Equivalently, if we interpret the intervals in a stick as \emph{convex} subsets, then the above intersections are convex and their complements are convex as well; we will refer to such intersections as \emph{bi-convex}. This provides us with a useful representation of cubillages via bi-convex systems in $Gr([n],d+1)$.

\vspace{-0.2cm}
\begin{figure}[htb]
\begin{center}
\includegraphics[scale=.35]{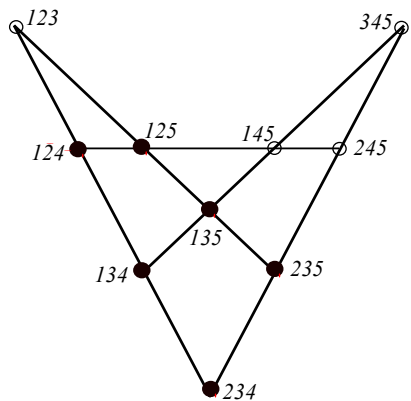}
\end{center}
\vspace{-0.4cm}
\caption{A rhombus tiling for $n=5$ represented via an inverse system (indicated by black vertices).}
\end{figure}

Note that if a system $S$ in $Gr([n], d+1)$ is bi-convex, then its complementary system $\overline{S} =Gr([n], d+1) - S $  is bi-convex as well. This gives an involution on the set of cubillages  ${\bf Q} (n,d)$, namely, $\mathcal{Q} \mapsto \overline{\mathcal{Q}}$ (geometrically, this is realized by reflecting a cubillage $\mathcal Q $ with respect  to the center of symmetry of the zonotope $Z (n,d)$).
\medskip

\noindent\textbf{Flips.} 
Next we recall facts about local transformations, or {\em  flips}, in cubillages (for details, see \cite[Sects.~8,12]{UMN-19}). As mentioned above, the simplest, after cubes, sort of zonotopes is formed by capsids 
$Z (d+1, d)$. Any capsid has exactly two cubillages: the standard and anti-standard ones (this follows from the fact that the cube $C=Z(d+1,d+1)$ has exactly two membranes: the front and rear sides of $C$). If a cubillage $\mathcal Q$ contains a capsid fragment $K$, we can replace the existing filling $\mathcal Q |_K$ in it by the other one. This local exchange is called a  \emph{flip} in the cubillage, and we say that this flip is \emph{raising} (\emph{lowering}) if it replaces the standard filling in a capsid by the anti-standard one (resp. the anti-standard by standard one). For example, in the standard tiling in Fig.~\ref{fig1} (left), one can see three capsids (viz. hexagons), all having the standard fillings, and we can make a raising flip in each of them.

Associating each raising flip with an \emph{arrow} (directed edge) between the corresponding cubillages, we get a structure of \emph{digraph} (directed graph) on the set ${\bf Q}(n,d)$. See e.g.~\cite{zieg}, where  such a digraph is depicted for ${\bf Q}(5,2)$ (it has 62 vertices).

Combinatorially, the raising flip in a capsid of type $F$ in a cubillage $\mathcal Q$ leads to increasing the set of inversions by one element, namely, by $F$ (so $Inv (\mathcal Q' )=Inv (\mathcal Q )\cup\{F\}$, where $\mathcal Q' $ is the resulting cubillage). In particular, this implies that the digraph ${\bf Q}(n,d)$ is acyclic. The transitive closure of this digraph turns  ${\bf Q} (n,d)$ into a poset (partially ordered set), which is a sample of higher Bruhat orders of  type $A$. This poset is ranked, where the rank of a cubillage $\mathcal Q $ is the cardinality of $Inv(\mathcal Q )$.
The standard cubillage is the unique source (zero-indegree, or minimal, vertex) 
of the digraph ${\bf Q}(n,d)$; symmetrically, the anti-standard cubillage is the unique sink (zero-outdegree, or maximal, vertex). 
\medskip

\textbf{Ziegler's trellises.} 
Following Ziegler \cite{zieg}, for any  $n,d$, one can attempt to draw the Grassmannian $Gr ([n],d+1)$, together with the stick system (bijective to $Gr ([n], d+2)$),  on the plane so that the sticks be depicted as straight line segments. 
We call such a diagram (when succeeded) a {\em Ziegler trellis} and denote it by $Tre(n,d)$. 
There are $d+2$ vertices on each stick, and each vertex belongs to $n-d-1$ sticks.
When considering symmetric cubillages, it is natural to draw these diagrams symmetrically. Without coming into detailed explanations, we illustrate some enlightening cases in Fig.~\ref{fig3}.

 \begin{figure}[htb]
\begin{center}
\includegraphics[scale=.7]{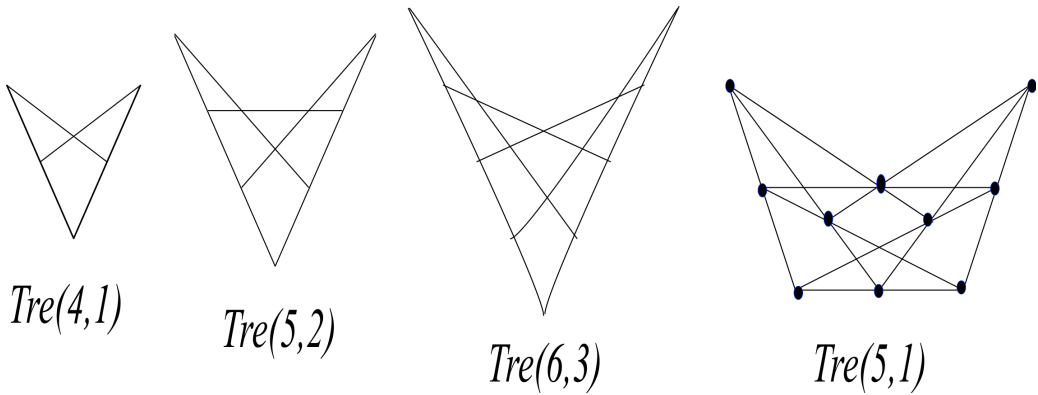}
\end{center}
\vspace{-0.4cm}
\caption{Four Ziegler's trellises.}
 \label{fig3}
\end{figure}

To specify a cubillage, we should select an appropriate (bi-convex) subset of vertices in the corresponding trellis. 


\section{Symmetric cubillages }\label{sec3}

The symmetry of cubillages can be stated in different ways. 
We consider one specific involution, which admits two (equivalent) definitions. We start with a faster (though less visual) combinatorial definition. A  geometric one is given in the next section.

Define the involution on the set $[n]$ of colors by $i \mapsto i^\circ :=n+1-i$. This is extended, in a natural way, to the involution $X\mapsto X^\circ:=\{i^\circ\colon i\in X\}$ on the subsets $X$ of $[n]$, yielding a natural involution on the Grassmannian $Gr([n],d)$ for each $d$. If $ \mathcal S $ is 
a bi-convex system in $Gr ([n],d+1)$, then the system 
$\mathcal S ^\circ =\{X\colon X^\circ \in \mathcal S \}$ is bi-convex as well, in view of the obvious identity for $F\subseteq [n]$ of size $d+2$:
$$
                             Gr(F^\circ ,d+1)=Gr(F,d+1)^\circ .
                       $$
This enables us to define the involution $\mathcal Q \mapsto \mathcal Q ^\circ $ on cubillages by use of the formula on inversions
                         $$
  Inv(\mathcal Q ^\circ       )=Inv(\mathcal Q )^\circ .
                           $$

\noindent\textbf{Definition.} A cubillage $\mathcal Q $ is called \emph{symmetric} if $\mathcal Q ^\circ =\mathcal Q $. 
\medskip


Symmetric cubillages correspond to symmetric bi-convex collections $\mathcal S $ in $Gr([n], d+1)$, that is, such that $\mathcal S =\mathcal S ^\circ $. 
Symmetric cubillages exist for any $n,d$. In particular, both the standard and anti-standard cubillages are such. The set of symmetric cubillages in $Z (n,d)$ is denoted by ${\bf SQ} (n,d)$. \medskip

\noindent\textbf{Example 1.} 
A cubillage for $d=1$ is nothing else than a sequence of $n$ different numbers in $[n]$ (a permutation). The symmetry of such a permutation  means that if a color $i$ occurs at $k$-th place, then the symmetric color $i^ \circ $ does at place  $k^\circ $. For example, suppose that color $5$ be situated before than $2$. Then the pair $\{2,5\}$ belongs to $Inv(\mathcal Q )$. By symmetry, the pair $\{2^\circ ,5^\circ \}$ belongs to $Inv(\mathcal Q )$ as well. Since $5^ \circ <2^\circ $, we obtain that $5^\circ $ is situated later than $2^\circ $.

If $n=2m$, then a symmetric permutation is equivalent to a signed permutation on the set $[m]$, namely, a permutation whose elements are endowed with sign $+$ or $ - $ (where $ + i$ ($-i$) occurring at place $k$ means that in the original permutation this place is occupied by $i$ (resp. $i^\circ=2m-i+1$)). In~\cite{hump}  the signed permutations are attributed as elements of the group $B_m$ (see also~\cite{eln,BB}). (The quantity of such permutations is $m!2^m$.) However, this group structure is beyond our consideration here, and for reasons that will be clearer later, we prefer to talk about type $C_m$, rather than $B_m$, at this point. 

If $n=2m+1$, then at the middle, $(m+1)$-th, place of a symmetric permutation, there always stays the middle color $m+1$. This implies that in order to form a symmetric permutation for $2m+1$, one should take a sign permutation on $[m]$, followed by color $m+1$, and then proceed by the symmetry. So the set (and digraph) ${\bf SQ}(2m+1,1)$ is isomorphic to ${\bf SQ}(2m, 1)$, and the corresponding poset is attributed as the \emph{weak (Bruhat) order} $B_m$.\medskip

\noindent\textbf{Example 2.} 
Fig.~\ref{fig4} illustrates two symmetric cubillages for $d=2$ (rhombus tilings).
\medskip
       \begin{figure}[htb]
\begin{center}
\includegraphics[scale=.4]{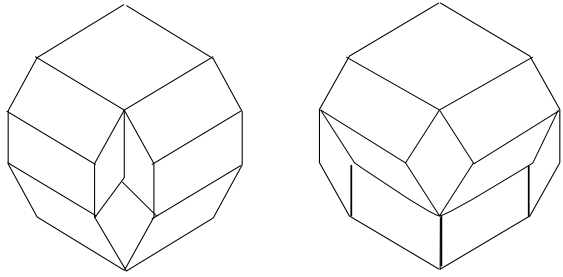}
\end{center}
\vspace{-0.4cm}
\caption{Two symmetric rhombus tilings of the zonogon $Z (5,2)$.}
 \label{fig4}
\end{figure}

\noindent\textbf{Example 3.} 
When $d=3$, it is not so easy to visualize a symmetric cubillage directly. Instead, we can use the combinatorial representation via inversions. For example, for $n=6$, one can take the two-element system $\{1234, 3456\}$ in $Gr([6], 4)$, which gives rise to a symmetric cubillage with two inversions in $Z(6,3)$. For these $n,d$, it is convenient to use the corresponding Ziegler's trellis. Then, to specify a symmetric cubillage, one should take (mark) a subset of vertices on the trellis which is bi-convex and symmetric. See Fig.~\ref{fig5} where the vertices 1234 and 3456 are marked.

 \begin{figure}[htb]
\begin{center}
\includegraphics[scale=.6]{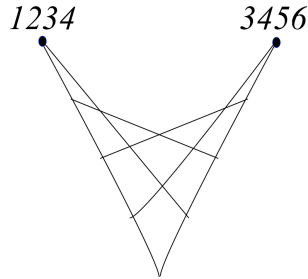}
\end{center}
\vspace{-0.4cm}
\caption{An example of symmetric cubillages of $Z(6,3)$ in terms of Ziegler's trellis.}
 \label{fig5}
\end{figure}

  \medskip
  
\noindent\textbf{Remark 1.} Acting in terms of inversion sets, one can also introduce the notion of a \emph{skew-symmetric} cubillage; this is a cubillage whose inversion system is defined to be a bi-convex collection $\mathcal S $ in $Gr([n], d+1)$ such that $\mathcal S ^\circ =Gr([n],d+1) - \mathcal S $. Such cubillages are encountered in~\cite{DKK3} (which studies a variety of cubillages $\Qscr$ whose vertex set is symmetric w.r.t. the involution $X\mapsto X^\ast:=\{i\in[n]\,\colon\, i^\circ\notin X\}$ for $X\subseteq [n]$, called $\ast$-\emph{symmetric} cubillages), but will play a less important role in our considerations. Note that no skew-symmetric cubillage exists when $n$ is odd. 


\section{Geometric involution} \label{sec4}

The previous definition of involution was combinatorial and quite non-geometric. In this section we give a more enlightening geometric setting.

Let us return to the definition of a zonotope from Section~\ref{sec2}. It dealt with a system of guiding vectors $\xi _1,\ldots,\xi _n$ in $\mathbb R^{[d]}$, and the zonotope $Z(n,d)$ was defined to be the Minkowski sum of $n$ line segments $[0,\xi _i]$, $i=1,\ldots, n$.  From now on it will be more convenient for us to define the zonotope $Z=Z(n,d)$ as the sum of shifted segments $[-\xi _i/2, \xi _i/2]$. Then the symmetry center of  this shifted zonotope is located at the coordinate origin of  $ \mathbb R ^{[d]}$.

Suppose that we have a linear automorphism $A$ of the space $ \mathbb R^{[d]}$ which transfers the vector system $ \xi $ into itself up to signs, that is, $A(\xi _i)$ is $\xi _j$ or $-\xi_j$ for some $j$. In this case, the automorphism $A$ sends the zonotope $Z$ into itself as well. This implies that  $A$ preserves the set of cubillages, sending each $\mathcal Q $ to $A\mathcal Q $.

We use these observations as follows. Let us assign the guiding vectors $ \xi _i$ in a symmetric way on the Veronese curve, namely, $\xi_i=(1, t_i, t_i^2,\ldots, t_i^{d-1})$, and  $t_i=-t_{i^\circ }$. Take the map $A$ that sends the base vector $e_i$ to $(-1)^{d+i-1}e_i$, $i=1,\ldots,d$.
This alternately changes the signs of the base vectors. Also the last base vector $e_d$ is mapped to $-e_d$, and the first base vector $e_1$ to $(-1)^d e_1$. In its turn, $i$-th guiding vector $\xi _i=(1, t_i, t_i^2,\ldots, t_i^{d-1})$ is mapped to $(-1)^d (1,-t_i, t_i^2,\ldots,(-1)^{d-1}t_i ^{d-1})$, that is, to the vector $(-1)^d \xi _{i^\circ }$. Therefore, the symmetric zonotope $Z=Z(n,d)$ is mapped to itself, and the set of cubillages preserves.\medskip

\begin{lemma}\label{lemma1}
For any cubillage $\mathcal Q $ of  $Z (n,d)$, the cubillage $\mathcal Q ^\circ $ is equal to $A\mathcal Q $. 
\end{lemma}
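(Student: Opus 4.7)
The plan is to reduce the claim to an equality of inversion systems and then to lift $A$ by one dimension. Since a cubillage is uniquely determined by its inversion set in $Gr([n],d+1)$, it suffices to verify $Inv(A\mathcal{Q}) = Inv(\mathcal{Q})^{\circ}$.

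To construct the lift, I would introduce the linear automorphism $A'$ of $\mathbb R^{[d+1]}$ defined by the \emph{same} formula as $A$, namely $e_i \mapsto (-1)^{d+i-1} e_i$, but now for $i = 1, \ldots, d+1$ (keeping the same constant $d$ in the exponent). Three properties then follow by direct inspection. First, $A'(e_{d+1}) = (-1)^{2d} e_{d+1} = e_{d+1}$, so $A'$ preserves the $e_{d+1}$-coordinate of every point, and hence the ``before'' and ``after'' halves of $\widetilde{Z} := Z(n,d+1)$ relative to any given membrane. Second, the projection $\pi \colon \widetilde{Z} \to Z(n,d)$ forgetting $e_{d+1}$ intertwines the two maps: $\pi \circ A' = A \circ \pi$. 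Third, repeating the Veronese-curve computation from the paragraph preceding the lemma one coordinate higher yields $A'(\widetilde\xi_i) = (-1)^d\, \widetilde\xi_{i^\circ}$, so $A'$ preserves $\widetilde{Z}$ as a set and sends every cube of type $D \in Gr([n], d+1)$ to a cube of type $D^{\circ}$.

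With $A'$ in hand, I would pick a cubillage $\widetilde{\mathcal{Q}}$ of $\widetilde{Z}$ containing a membrane $\mathcal{M}$ for $\mathcal{Q}$. Then $A'\widetilde{\mathcal{Q}}$ is again a cubillage of $\widetilde{Z}$, containing the $d$-dimensional subcomplex $A'\mathcal{M}$. The intertwining property gives $\pi(A'\mathcal{M}) = A \cdot \pi(\mathcal{M}) = A\mathcal{Q}$, so $A'\mathcal{M}$ is a valid membrane for $A\mathcal{Q}$. Preservation of the $e_{d+1}$-coordinate under $A'$ then provides a bijection between the cubes of $\widetilde{\mathcal{Q}}$ lying before $\mathcal{M}$ and those of $A'\widetilde{\mathcal{Q}}$ lying before $A'\mathcal{M}$, and the type of each image cube is the color-involute of the original. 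Hence
\[
Inv(A\mathcal{Q}) \;=\; \{D^{\circ} : D \in Inv(\mathcal{Q})\} \;=\; Inv(\mathcal{Q})^{\circ} \;=\; Inv(\mathcal{Q}^{\circ}),
\]
which is exactly what was needed.

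No substantive obstacle arises: once $A'$ is chosen correctly, the argument is a routine tracking of signs and definitions. The delicate point is the sign convention in the lift; had we instead chosen $e_i \mapsto (-1)^{(d+1)+i-1} e_i$ (i.e., the same formula with $d$ replaced by $d+1$), then $e_{d+1}$ would be sent to $-e_{d+1}$, ``before'' and ``after'' would be interchanged, and we would obtain the complementary inversion system rather than the desired color-involuted one. Keeping the exponent $d+i-1$ with the original $d$ is precisely what makes all three properties above hold simultaneously.
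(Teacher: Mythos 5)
Your proof is correct and follows essentially the same route as the paper: your lift $A'$ (same diagonal formula with the exponent $d+i-1$ kept, hence fixing $e_{d+1}$) is exactly the paper's $\widehat{A}$, and the argument via membranes, preservation of the ``before'' half, and the type computation $D\mapsto D^{\circ}$ matches the paper's proof step for step. Your explicit remark about the sign convention on $e_{d+1}$ is a nice clarification but not a substantive difference.
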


\begin{proof}
Consider the linear automorphism $\widehat{A}$ of the space $\mathbb{R}^{[d+1]}$, which coincides with $A$ on the base vectors $e_i$ for $i\in [d]$, and sends $e_{d+1}$ into itself. Let $Z'=Z([n], d+1)$; then $\widehat{A}$ transfers $Z'$ into itself.

Represent the cubillage $\mathcal Q $ as a membrane $\mathcal M $ in the zonotope $Z'$ (so that $\pi (\mathcal M)=\mathcal Q)$, where $\pi$ is the projection $(x_1,\ldots,x_{d+1})\mapsto(x_1,\ldots,x_d)$) and then embed $\mathcal M$ into  some cubillage $\mathcal Q '$ of the zonotope $Z'$. Then $Inv(\mathcal Q )$ is the set of types of cubes of $\mathcal Q '$ which are located before (relative to the direction of $e_{d+1}$) the membrane $\mathcal M $. The cubillage  $A\mathcal Q $ is obtained as a projection (by $\pi$) of the membrane $\widehat{A} \mathcal M$; therefore, $Inv({A} \mathcal Q)$ is equal to the set of types of cubes of $\mathcal Q'$ located before the membrane $\widehat{A} \mathcal M$. But these cubes are exactly those of the form $\widehat{A}C'$ for $C'$ running over the cubes located before $\mathcal M $. In fact, the direction $e_{d+1}$ preserves under the action of $\widehat{A}$. Therefore, if a cube $C'$ is located before the membrane $\mathcal M $, then the cube $\widehat{A}C' $ is located before $\widehat{A} \mathcal M$.

On the other hand, the type of a cube $\widehat{A}C '$ is equal to the type of $(C')^\circ$. Therefore, $Inv(A \mathcal Q)$ is nothing but $Inv(\mathcal Q^\circ)$.
\end{proof}

This lemma clarifies the geometric meaning of the involution that we consider. To get the cubillage $\mathcal Q ^\circ $, one needs to act on the cubillage $\mathcal Q$ by the diagonal transformation $A$. Since $A$ is an involution, it is given by two eigenspaces with eigenvalues 1 and -1. The first one may be called the {\em axial space} of the reflection $A$. This space is generated by the base vectors $e_{d-1}, e_{d-3},\ldots$\,. The other (``orthogonal'') subspace (in which multiplication by $-1$ acts) is generated by the vectors $e_d, e_{d-2},\ldots$\,.

For example, when $d=1$, we simply ``turn over'' the coordinate. A cubillage corresponds to a permutation of colors; to get the symmetric cubillage, one should reverse the permutation.

When $d=2$, we usually assume that the first coordinate vector is directed  vertically (upward). It forms the axial space of our involution. Then the symmetric tilings are just those that are symmetric with respect to this vertical axis (see Example~2 in Section~\ref{sec3}). 
\medskip

\noindent\textbf{Remark 2.} Similar reasonings show that the skew-symmetric cubillages are those that are invariant with respect to the map $-A$. Note that as is explained in~\cite[Sect.~9]{DKK3}, when both $n,d$ are even, there is a bijection $\Qscr\mapsto \Qscr'$ between symmetric and $\ast$-symmetric cubillages in $Z=Z(n,d)$ (the latter was defined in Remark~1). In particular, this bijection is described there explicitly via a  transformation of the set of vertices. When $d=2$, there is a nice visualization: $\Qscr'$ is obtained from $\Qscr$ by the reflection w.r.t. the SW-to-NE line (at angle of $45^\circ$) through the center of $Z$ (which transforms the vertical axis into the horizontal one).

           
\section{Symmetric flips}\label{sec5}

Next we describe symmetric rearrangements, or \emph{flips}, in symmetric cubillages; they will endow the set ${\bf SQ} (n, d)$ with the structure of a digraph. We start with symmetric cubillages of ``small'' zonotopes, namely,  capsids $Z (d+1, d)$ and barrels $Z (d+2,d)$.

As mentioned in Section~\ref{sec2}, any capsid has exactly two cubillages --  standard and anti-standard ones; each of them is symmetric. Replacing the standard filling in a capsid by the anti-standard one is called the raising flip in this capsid.

A barrel has exactly two symmetric cubillages as well, the standard and anti-standard ones. This can be seen from the fact that Ziegler's trellis for the barrel $B=Z (d+2, d)$ forms one stick with $d+2$ vertices. By the symmetry and bi-convexity, the inversion set contained in this stick is either empty (which corresponds to the standard cubillage), or covers the entire stick (yielding the anti-standard cubillage). This implies that if a symmetric cubillage $\mathcal Q $ has a symmetric barrel fragment, then we can replace its filling by the ``opposite'' one, obtaining a new symmetric cubillage $\mathcal Q '$. We call this transformation a \emph{barrel flip}. (Note that in case $d=2$, a flip of this sort was originally described in~\cite[Sect.~6]{eln}, where it is called an \emph{octagon-flip}.) When needed, this ``complex'' flip can be represented as a composition of $d+2$ usual capsid flips within the barrel, but the capsids involved need not be symmetric, giving non-symmetric intermediate cubillages.

Therefore, symmetric cubillages (elements of ${\bf SQ}(n, d))$ can be connected by three types of ``arrows'' (representing symmetric flips), which we call \emph{simple}, \emph{double}, and \emph{thick}, and denote as $\to $, $\Rightarrow $, and $\Rrightarrow$, respectively. Here a simple arrow appears when there is a symmetric capsid with the standard filling (and therefore one can make a raising symmetric flip). A double arrow appears when there is a pair of symmetric capsids $K$ and $K^\circ$ which have no common cube and both are filled standardly. (It should be noted that capsids $K$ and $K^ \circ$ in a symmetric cubillage are always filled in the same way, either standardly or anti-standardly.) And a thick arrow appears when there is a (symmetric) barrel with the standard filling.

Thus, the set ${\bf SQ}(n, d)$ is endowed with a digraph structure (having  three sorts of edges-arrows). This digraph is acyclic; moreover, it is ranked, where, as before, the rank of a cubillage $\mathcal Q $ is meant to be the size of the inversion set $Inv(\mathcal Q )$. The standard  cubillage is a source (viz. zero-indegree vertex) of this digraph. The important question is: whether or not this is the \emph{unique} source in the digraph? Equivalently, is it true or not that any symmetric cubillage can be reached by a sequence of symmetric raising flips, starting from the standard cubillage? A similar question can be asked about the anti-standard cubillage and sinks (viz. zero-outdegree vertices) in the digraph, but this is reduced to the previous one due to a natural symmetry on ${\bf SQ}(n, d)$. 
\medskip

\noindent\textbf{Conjecture 1.} {\em For any $n,d$, the digraph ${\bf SQ}(n, d)$ has only one source, namely, the standard cubillage of $Z([n],d)$ (and similarly about the anti-standard cubillage).                    }\medskip

A weaker conjecture is that this digraph is connected in the undirected sense (when the directions of edges are discarded); this would immediately follow from validity of Conjecture 1.\medskip

In the next section we verify Conjecture 1 in three special cases: when $d$ is odd and $n$ is even, when $d=2$, and when $n=d+3$.
           
           
\section{Verification of  Conjecture 1 in three special cases} \label{sec6}

\begin{prop}\label{pro1}
Conjecture 1 is valid if $n$ is even and $d$ is odd.
\end{prop}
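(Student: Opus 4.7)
The plan is to show that any symmetric cubillage $\mathcal Q\ne\mathcal Q^{st}$ admits an incoming simple or double arrow in $\mathbf{SQ}(n,d)$; this will make the standard cubillage the unique source. Since $\mathcal Q$ is non-standard in the type-$A$ digraph $\mathbf Q(n,d)$, which by the results recalled in Section~\ref{sec2} has $\mathcal Q^{st}$ as its unique source, there must exist a lowering capsid flip in $\mathcal Q$: some packet $F\in Inv(\mathcal Q)$ whose corresponding capsid $K$ in $\mathcal Q$ is filled anti-standardly. Because $\mathcal Q$ is $\circ$-invariant, the capsid $K^\circ$ of type $F^\circ$ is likewise present in $\mathcal Q$ with the same (anti-standard) filling.

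The heart of the argument is a parity count that prevents $K$ and $K^\circ$ from interacting in any way except by coincidence. When $n$ is even, the color involution $i\mapsto i^\circ$ has no fixed points, so every $\circ$-symmetric subset of $[n]$ is a disjoint union of orbits $\{i,i^\circ\}$ and has even cardinality; in particular $|F\cap F^\circ|$ is even. Since $d$ is odd, $|F|=d+1$ is even as well. Now $K$ and $K^\circ$ share a cube exactly when their types share a $d$-subset, that is, when $|F\cap F^\circ|\ge d$. As $d$ is odd and $|F\cap F^\circ|$ is an even integer at most $d+1$, the only possibility is $|F\cap F^\circ|=d+1$, which forces $F=F^\circ$ and hence $K=K^\circ$. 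So either $K$ is symmetric, or $K$ and $K^\circ$ are cube-disjoint.

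In the first case, flipping the symmetric capsid $K$ produces a symmetric cubillage $\mathcal Q'$ with $Inv(\mathcal Q')=Inv(\mathcal Q)\setminus\{F\}$, yielding an incoming simple arrow $\mathcal Q'\to\mathcal Q$. In the second case the pair $(K,K^\circ)$ can be flipped simultaneously since the two flips act on disjoint regions, producing a symmetric cubillage $\mathcal Q'$ with $Inv(\mathcal Q')=Inv(\mathcal Q)\setminus\{F,F^\circ\}$ and an incoming double arrow. In either case $\mathcal Q$ is not a source of $\mathbf{SQ}(n,d)$, which is the desired conclusion.

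The main thing to watch is the parity count, which is exactly what makes this case strictly easier than the general one: no barrel (thick) flip is ever invoked, since the pattern that would force a symmetric barrel, namely $|F\cap F^\circ|=d$ with $F\ne F^\circ$, is ruled out by parity. The general conjecture thus remains open precisely in the regimes ($n$ odd, or $n$ and $d$ both even) where symmetric barrels can legitimately appear and an honest barrel-flip analysis is required.
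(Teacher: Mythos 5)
Your proposal is correct and follows essentially the same route as the paper: take an anti-standardly filled capsid $K$ in a non-standard symmetric cubillage, and use the fact that for $n$ even every $\circ$-symmetric subset of $[n]$ has even cardinality, against the odd value of $d$, to conclude that $K$ and $K^\circ$ either coincide or share no cube, which yields a simple or double lowering flip. The paper runs the same parity clash through the type $R=T\cap T^\circ$ of a hypothetical common cube (symmetric, yet of odd size $d$) rather than through $|F\cap F^\circ|$, a purely cosmetic difference.
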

 
 \Xcomment{
\medskip
  }

\begin{proof}
 Let $ \mathcal Q $ be a symmetric cubillage of the zonotope $Z(n,d)$ different from the standard one. Then (see~\cite[Sect.~12]{UMN-19}) it contains a capsid $K$ with the anti-standard filling. If $K$ is symmetric ($K=K^ \circ $), we can make a simple lowering flip in $\mathcal Q$ and approach the standard cubillage (reducing the system $Inv (\mathcal Q)$  by one element).

Now consider the case when the capsid $K$ is not symmetric,  $K\ne K^\circ $. We claim that these two capsids are innerly disjoint, that is, they have no common interior point (equivalently, no cube in common). Indeed, let $T$ be the type of  $K$. Then the type of $K^\circ $ is equal to $T^\circ $. Suppose that these capsids have a common cube $C$ of type $R$. Then $T=R\cup \{i\}$, $T^\circ =R\cup \{j\}$, and $R=T\cap T^\circ $. It follows that $R =R^\circ $ and $j=i^\circ $. Since $n$ is even, any symmetric subset in $[n]$ has an even size; so is $R$. But $|R|=d$ is odd. A contradiction.

Thus, the capsids $K$ and $K^ \circ $ have no cube in common. Then we can make the lowering flips in both, getting closer to the standard cubillage again. 
  \end{proof}

As a bi-product of the proof, we obtain that when $n$ is even and $d$ is odd, all arrows in the digraph on ${\bf SQ}(n, d)$ are either simple or double, not thick.\medskip

\begin{prop}\label{pro2}
Conjectire 1 is valid if $n=d+3$. 
\end{prop}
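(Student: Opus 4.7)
The plan is to mimic the proof of Proposition~\ref{pro1} and to account for the new obstacle that appears when $n$ is odd. Let $\Qscr \ne \Qscr^{st}$ be a symmetric cubillage of $Z(n,d)$ with $n = d+3$, and pick, by \cite[Sect.~12]{UMN-19}, an anti-standardly filled capsid $K\subset \Qscr$ of some type $T$. I would split into three cases according to how $K$ and its conjugate $K^\circ$ relate.

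Case 1: $T = T^\circ$, so $K=K^\circ$ is symmetric; then a simple lowering flip on $K$ produces a symmetric cubillage of strictly smaller rank. Case 2: $K \ne K^\circ$ but they share no cube; then the double lowering flip on the pair $\{K, K^\circ\}$ does the job. Case 3: $K \ne K^\circ$ share a cube of type $R$. Repeating the argument of Proposition~\ref{pro1} shows $R = R^\circ$ with $|R| = d$, so $d$ must be even (and $n$ odd), while $T = R \cup \{i\}$ and $T^\circ = R \cup \{i^\circ\}$ for some $i \ne i^\circ$. The union $T \cup T^\circ = R \cup \{i, i^\circ\}$ is a symmetric $(d+2)$-subset of $[n]$. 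As $n = d+3$, such subsets are the complements $[n]\setminus \{k\}$, and symmetry forces $k=k^\circ$; with $n$ odd this singles out $k_0 := (n+1)/2$ and the unique symmetric packet $F_0 := [n]\setminus \{k_0\}$, giving $T \cup T^\circ = F_0$.

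Both $T$ and $T^\circ$ lie in the stick $Gr(F_0, d+1)$. Since the color involution sends $F_0 \setminus \{f\}$ to $F_0 \setminus \{f^\circ\}$ and reverses the lexicographic order on this stick, the set $S := Inv(\Qscr) \cap Gr(F_0, d+1)$ is symmetric; by Ziegler's condition it is an initial or final interval of the stick. A short check (a symmetric initial interval $\{s_1,\dots,s_k\}$ equals its conjugate final interval $\{s_{d+3-k},\dots,s_{d+2}\}$ only when $k=0$ or $k=d+2$) then forces, since $T\in S$, the equality $S = Gr(F_0, d+1)$. Consequently the symmetric barrel $Z(F_0, d)$ is filled anti-standardly inside $\Qscr$, and a thick lowering (barrel) flip yields a symmetric cubillage of strictly smaller rank.

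The main obstacle is Case~3: checking that the barrel $Z(F_0, d)$ really appears as a genuine geometric fragment of $\Qscr$ so that the thick flip is legitimate. The saving fact is that, for $n = d+3$, contracting the color $k_0$ turns $\Qscr$ into a symmetric cubillage of the barrel $Z(F_0, d)=Z(d+2, d)$, which admits only the two symmetric cubillages (standard and anti-standard); the equality $S = Gr(F_0, d+1)$ pins this reduction down as the anti-standard one, singling out an honest anti-standard barrel fragment of type $F_0$ in $\Qscr$. Iterating this three-case reduction drives the rank down to zero, yielding Conjecture~1 in the case $n=d+3$.
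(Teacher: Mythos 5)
Your Cases 1 and 2, and the combinatorial bookkeeping at the start of Case 3, are fine: the identification of the unique symmetric packet $F_0=[n]\setminus\{k_0\}$ containing $T\cup T^\circ$, and the conclusion that $Inv(\Qscr)\cap Gr(F_0,d+1)$ is the whole stick, are correct. The gap is the final step of Case 3: from ``the full stick $Gr(F_0,d+1)$ lies in $Inv(\Qscr)$'' you cannot conclude that $\Qscr$ contains a geometric barrel fragment of type $F_0$ on which a thick flip can be performed. Your justification via reduction is a non sequitur: contracting the color $k_0$ \emph{translates} all cubes lying after the pie of color $k_0$ by $-\xi_{k_0}$, so the cubes of $\Qscr$ whose types avoid $k_0$ assemble into a barrel only \emph{after} the contraction; inside $\Qscr$ itself they are in general separated by that pie and do not form a fragment. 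A concrete counterexample to your inference: for the anti-standard cubillage $\overline{\mathcal Q}$ of $Z(5,2)$ one has $Gr(1245,3)\subseteq Inv(\overline{\mathcal Q})$ (and its reduction by color $3$ is the anti-standard octagon), yet $\overline{\mathcal Q}$ contains no octagon fragment of type $1245$: a lowering barrel flip there would produce the inversion set $Gr([5],3)\setminus Gr(1245,3)$, whose trace on the stick $Gr(1234,3)$ is $\{123,134,234\}$, not an interval, violating Ziegler's condition. The paper's own proof of Proposition~\ref{pro3} (Fig.~\ref{fig7}) exhibits exactly the phenomenon you need to exclude: two anti-standardly filled capsids $K,K^\circ$ sharing a cube that do \emph{not} extend to a barrel; overcoming it requires choosing $K$ as low as possible plus a further argument, and nothing of that sort appears in your Case~3 (your $K$ is an arbitrary anti-standardly filled capsid).

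For comparison, the paper's proof takes a different route altogether: it first reduces to $d$ even via Proposition~\ref{pro1}, then works in the Ziegler trellis $Tre(d+3,d)$, whose sticks split into one horizontal stick $H$ and two symmetric slanted families. It picks a maximum-height vertex $v$ of $Inv(\Qscr)$ and cases on its position (above $H$, below $H$, on $H$, or a corner), producing a simple, double, or barrel lowering flip in each case; crucially, the barrel flip is invoked only when the inversion set is exactly the set of trellis vertices on or below $H$ — a far stronger hypothesis than merely containing the stick $H$, and the one that actually guarantees the barrel fragment is present. To repair your argument you would need to prove, for $n=d+3$, that whenever Case~3 occurs for a suitably chosen (e.g.\ lowest) capsid, either the barrel fragment genuinely exists or some other lowering flip is available; as written, the proof is incomplete.
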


\begin{proof}
In view of Proposition~\ref{pro1}, we may assume that $d$ is even.  Consider the corresponding Ziegler's trellis. It has a rather simple structure, illustrated in Fig.~\ref{fig6}.

\begin{figure}[htb]
\begin{center}
\includegraphics[scale=0.8]{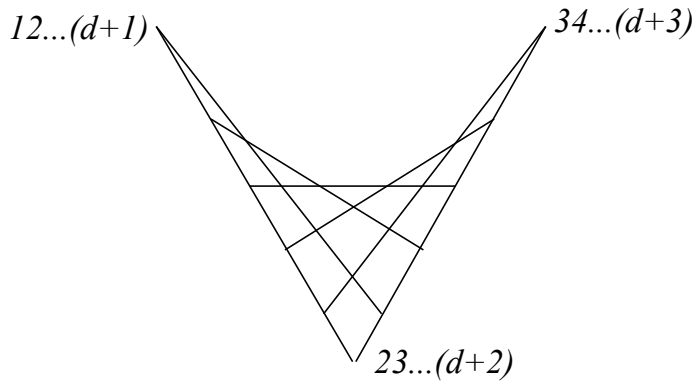}
\end{center}
\vspace{-0.4cm}
\caption{Trellis $Tre (d+3, d)$ for $d$ even.}
 \label{fig6}
\end{figure}

In this trellis, there are three corner vertices and they correspond to the interval subsets of size $d+1$ in $[n]$. Also there are $n=d+3$ sticks, each containing $d+2$ vertices, and each vertex belongs to two sticks. A feature of the situation is the presence of one ``horizontal'' stick-line $H$ (related to the packet $2\cdots (d+2)$). The remaining sticks are partitioned into two sets, one consisting of \emph{NW--SE} sticks, and the other of \emph{SW--NE} sticks, which are symmetric to each other. Note that each vertex lying above  $H$ belongs to two sticks of the same set, while each vertex below $H$ belongs to two sticks from different sets.

Let $\mathcal Q $ be a symmetric cubillage different from the standard one. Then its inversion system $S=Inv (\mathcal Q )$ is nonempty, symmetric and bi-convex. 
Let $v$ be a vertex of $S$ with the maximum height in the trellis. Consider four cases.
\smallskip

1. Suppose that $v$ lies above $H$ and is not a corner. Let $A,B$ be the sticks containing $v$; w.l.o.g., one may assume that they are NW--SE sticks. Then their symmetric sticks $A',B'$ in the trellis are SW--NE sticks, and their common vertex $v'$ is different from $v$ and belongs to $S$. Considering the intersections of $A$ and $B$ with $A'$ and $B'$, and using the bi-convexity and the fact that $v$ is maximal but not a corner, one can realize that all vertices of $A$ and $B$ below $v$ belong to $S$, and therefore, the removal of $v$ from $S$ preserves the bi-convexity. And similarly for the removal of $v'$. Moreover, the removal of both $v,v'$ preserves the symmetry. This means that we can make a double lowering flip in $\mathcal Q $.
\smallskip

2. When $v$ lies below $H$, we argue in a similar way with the only difference that $v$ may be self-symmetric, in which case we deal with a single flip in $\mathcal Q $.
\smallskip

3. Now let $v$ lie on $H$. By the symmetry and bi-convexity, 
all vertices of $H$ must belong to  $S$. This implies that all vertices lying below $H$ must belong to $S$ as well (since they lie on slanted sticks meeting $H$). Therefore, the removal of all vertices of $H$ from $S$ preserves both the bi-convexity and symmetry. In view of $n=d+3$, this removal corresponds to a barrel flip in $\mathcal Q $.
 \smallskip
 
4. Finally, let $v$ be an upper corner. Then the other upper corner $v'$ is in $S$ as well. Let $\Ascr$ be the set of NW--SE sticks plus, possibly, $H$ whose intersections with $S$ give initial intervals in the sticks; this $\Ascr$ is nonempty as it includes the leftmost stick (containing $v$). Let $\Ascr'$ be symmetric to $\Ascr$. Now consider the set $X$ of vertices of $S$ with the minimal height among those contained in $\Ascr\cup\Ascr'$.  If $X$ covers $H$, we act as in case~3. Otherwise choose in $X$ an arbitrary vertex $w$ and its symmetric vertex $w'$ (possibly $w=w'$). One can check that the removal of such $w,w'$ preserves the bi-convexity, yielding a single or double lowering flip.
 \end{proof}

 \begin{prop}\label{pro3} Conjecture 1 is valid if $d=2$. 
\end{prop}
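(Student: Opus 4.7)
Let $\mathcal Q$ be a symmetric non-standard cubillage of $Z([n],2)$, with $S:=Inv(\mathcal Q)\subseteq Gr([n],3)$ a nonempty symmetric bi-convex collection. The plan is to produce a symmetric lowering flip out of $\mathcal Q$; iterating on $|S|$ then brings us down to the standard cubillage, which by the type-$A$ theory recalled in Section~\ref{sec2} is the unique source of $\mathbf{Q}(n,2)$.

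By that same type-$A$ theory, $\mathcal Q$ (being non-standard) admits an ordinary lowering capsid flip: there exists a 3-subset $F\in S$ (the type of an anti-standard hexagon of $\mathcal Q$) such that $S\setminus\{F\}$ is bi-convex, and by symmetry of $S$ the element $F^\circ$ is of the same kind. I then split into three cases according to $|F\cap F^\circ|$. When $F=F^\circ$, the hexagon is self-symmetric and a \emph{simple} lowering flip applies. When $|F\cap F^\circ|\le 1$, one has $|F\cup F^\circ|\ge 5$, so $F$ and $F^\circ$ never sit together in a single 4-stick; the removals of $F$ and $F^\circ$ therefore affect disjoint sticks, $S\setminus\{F,F^\circ\}$ is bi-convex, and a \emph{double} lowering flip applies. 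The substantive case is $|F\cap F^\circ|=2$: set $G:=F\cup F^\circ$, a symmetric 4-subset. The color involution reverses the stick $Gr(G,3)=\{g_1g_2g_3,g_1g_2g_4,g_1g_3g_4,g_2g_3g_4\}$ (with $g_1<g_2<g_3<g_4$), swapping the two ends and the two middles. Among initial and final intervals of this reversed stick, only $\emptyset$ and the full stick are symmetric; since $F,F^\circ\in S\cap Gr(G,3)$, this forces $Gr(G,3)\subseteq S$. The plan in this case is to perform a \emph{thick} (barrel) lowering flip at $G$, removing all four elements of $Gr(G,3)$ from $S$ simultaneously.

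The main obstacle is the verification in the thick case that the barrel flip is legitimate -- namely, that the octagon of type $G$ appears as an anti-standardly filled fragment of $\mathcal Q$ and that $S\setminus Gr(G,3)$ is bi-convex. For a 4-stick $G'\ne G$ with $|G\cap G'|\le 2$ nothing changes (since $Gr(G',3)\cap Gr(G,3)=\emptyset$); only the sticks with $|G\cap G'|=3$ need scrutiny, where a single element $R=G\cap G'\in S$ is removed and one must check that $R$ was an endpoint of $S\cap Gr(G',3)$. The most transparent route I see is geometric: view $\mathcal Q$ as a membrane $\mathcal M$ in $\widetilde Z=Z([n],3)$ and embed $\mathcal M$ into a cubillage $\widetilde{\mathcal Q}$ of $\widetilde Z$; the four cubes of types in $Gr(G,3)$ all lie before $\mathcal M$, and a direct verification shows they assemble into a complete sub-barrel of type $G$ whose rear boundary is exactly the anti-standard octagonal fragment of $\mathcal Q$ on which the thick flip acts. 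The resulting $\mathcal Q'$ is then a genuine cubillage, so $Inv(\mathcal Q')=S\setminus Gr(G,3)$ is automatically bi-convex. A combinatorial alternative would be to refine the initial choice of $F$ by taking it extremal in an appropriate partial order on $Gr([n],3)$ (in the spirit of the maximum-height argument used in the proof of Proposition~\ref{pro2}), so that the endpoint conditions propagate simultaneously to all four elements of $Gr(G,3)$.
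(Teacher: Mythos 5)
Your overall strategy (induct on $|S|$ by exhibiting a symmetric lowering flip) matches the paper's, and your first two cases are fine: the self-symmetric case, and the observation that when $|F\cap F^\circ|\le 1$ no $4$-stick contains both $F$ and $F^\circ$, so $S\setminus\{F,F^\circ\}$ stays bi-convex and a double flip applies. Your derivation that the involution reverses the stick $Gr(G,3)$ and hence forces $Gr(G,3)\subseteq S$ when $|F\cap F^\circ|=2$ is also correct and is a clean way to see that a barrel is the only candidate move at that location.

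The gap is exactly at the step you flag as ``the main obstacle'' and then dismiss with ``a direct verification shows they assemble into a complete sub-barrel.'' That claim is false for an arbitrary flippable $F$: the paper's Figure~\ref{fig7} is a counterexample. There, two symmetric-to-each-other hexagons are genuine anti-standardly filled capsid fragments sharing one symmetric rhombus (so $F$ satisfies all your hypotheses and $Gr(G,3)\subseteq S$ by your stick argument), yet the six rhombi of types in $\binom{G}{2}$ do \emph{not} form an octagonal fragment of the tiling — the missing rhombus of type $g_1g_4$ sits elsewhere — so no barrel flip is available at $G$. Having $Gr(G,3)\subseteq S$ does not imply that the corresponding cubes form a local zonotopal fragment, just as in type~A the three rhombi of types in $Gr(F,2)$ need not form a hexagon; and you never actually verify that $S\setminus Gr(G,3)$ is bi-convex (it need not be for a bad choice of $F$). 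Your parenthetical fallback — choose $F$ extremal in a suitable order — is the right idea and is what the paper does (it takes the anti-standard hexagon as low as possible), but that is where all the real work lies: one must then analyze the edges entering the lowest vertex of the shared rhombus and, in the non-adjacent case, invoke a ``chandelier'' lemma (\cite[Lemma~2.6]{UMN-10}) to produce a strictly lower anti-standard hexagon and derive a contradiction. Without carrying out that refinement, the barrel case of your argument does not go through.
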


\noindent(For $n$ even, a weaker version, concerning merely the connectedness of ${\bf SQ}(n, 2)$, was shown in~\cite[Prop.~6.4]{eln} (by use of a general result on reduced words in Coxeter groups) and in~\cite[Theorem~3.4]{DKK3} (directly); in both cases, the description is given in terms of $\ast$-symmetric tilings, defined in Remark~2.)
\medskip

\begin{proof}
Let $m:=\lfloor n/2\rfloor$. Consider a symmetric rhombus tiling $\mathcal T $ of the zonogon $Z=Z(n,2)$ (where the symmetry is taken with respect to the vertical axis, such as in Fig.~\ref{fig1} or~\ref{fig4}). A feature of such a tiling is the presence of a vertical \emph{spine} consisting of $m$ symmetric rhombuses cut by the axis (plus one edge of the middle color $m+1$ if $n=2m+1$).

We assert that by making a sequence of  symmetric lowering flips (simple, double, and/or barrel ones) one can transform $\mathcal T $ into the standard tiling. 

To show this, assume that $\mathcal T $ is not standard. Then it has a capsid (hexagon) $H$ with the anti-standard filling (consisting of three rhombi of which two share the topmost vertex of the hexagon). (a) If $H$ is symmetric, then one can make a simple lowering flip in it. (b) If $H$ is innerly disjoint from the symmetric hexagon $H^\circ $, then one can make a double lowering flip in this pair of hexagons. In both cases, we can use induction.

A more difficult situation (which is principal for the Conjecture 1 with $d$ even in general) arises  when these hexagons are different but have a common interior point. One can see that in this case $H\cap H^\circ$ consists of one symmetric rhombus. This situation is illustrated in Fig.~\ref{fig7}.

\begin{figure}[htb]
\begin{center}
\includegraphics[scale=0.8]{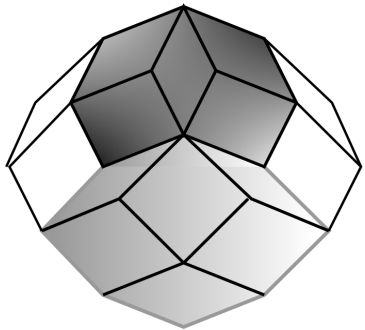}
\end{center}
\vspace{-0.4cm}
\caption{No barrel at the top.}
  \label{fig7}
\end{figure}

\noindent(This picture illustrates a symmetric tiling of the zonogon $Z(6,2)$. Here in the upper part, two (symmetric to each other) hexagons, shadowed darker, share a symmetric rhombus. These two hexagons are not extended to a  barrel (octagon) in the tiling; so we cannot make a barrel flip at this place. Fortunately, there are two other hexagons below (shadowed lighter), which have the anti-standard fillings and already admit an extension to a barrel. So we can make a lowering flip in this barrel.)

The above trouble is overcome as follows. Choose a hexagon $H$ in $ \mathcal T$ so that it has the anti-standard filling and is located as low as possible. We may assume that we are not in cases (a),(b) above. Then the hexagons $H$ and $H^\circ$ (filled anti-standardly as well) share one rhombus $R$. Let $v$ be the lowest vertex of $R$, and let $e$ ($e'$) be the edge in $H$ (resp. $H^\circ$) entering $v$. Two cases are possible: (i) $e,e'$ are neighboring edges in $ \mathcal T$ entering $v$; and (ii) there is an edge $e''$ in  $\mathcal T$ entering $v$ and lying between $e$ and $e'$. 

In case~(i), the edges $e,e'$ belong to one rhombus in $ \mathcal T$, and adding it to $H$ and $H^\circ$, we obtain a barrel with the anti-standard filling, yielding the result.

And in case~(ii) (illustrated in Fig.~\ref{fig8}), one can show (see e.g.~\cite[Lemma~2.6]{UMN-10}) the existence of one more hexagon in $ \mathcal T$ which is filled anti-standardly and lies lower than $H$, yielding a contradiction.
 \end{proof}

 \begin{figure}[htb]
\begin{center}
\includegraphics[scale=.25]{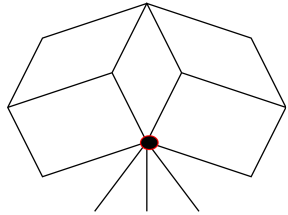}
\end{center}
\vspace{-0.4cm}
\caption{``Chandelier'' (an illustration to case~(ii) in the proof of Proposition~\ref{pro3}).}
 \label{fig8}
\end{figure}


\section{Higher Bruhat of types B and C}\label{sec7}

\noindent\textbf{Definition.}
Recall that for positive integers $n$ and $d\le n$, depending on the context, we may use notation $\mathbf{SQ}(n,d)$ either for the set of symmetric cubillages of the zonotope $Z(n,d)$, or for the corresponding acyclic digraph (when the edges-arrows associated with raising symmetric flips are included in consideration), or for the poset obtained as the transitive closure of the above digraph; see Sections~\ref{sec2} and~\ref{sec5}. 
In the second (third) case, when $n$ is even, $n=2m$, we refer to the digraph (resp. poset) $\mathbf{SQ}(n,d)$ as  $d$-th \emph{higher Bruhat digraph ({\rm resp.} order) of type $C_m$} and denote it as $\mathbf{C} (m,d)$. When $n$ is odd, $n=2m+1$, the digraph (poset) $\mathbf{SQ}(n,d)$ is referred to as $d$-th \emph{higher Bruhat poset ({\rm resp.} order) of type $B_m$}, denoted  as $\mathbf{B} (m,d)$.
 \medskip

\noindent\textbf{Example 4.} 
Return to Example 1 from Section~\ref{sec3}. For  $d=1$, the  Bruhat of types $C$ and $B$ coincide as sets.  Moreover, they coincide as digraphs as well. The only difference is that a simple arrow in $\mathbf{C} (m, 1)$ becomes a thick arrow in $\mathbf{B}(m,1)$.

Figure~\ref{fig9} illustrates the trellises $Tre(4,1)$ and $Tre(5,1)$, and below we describe the digraphs ${\bf SQ} (4,1)$ and ${\bf SQ} (5,1)$, viz. the first Bruhat of types $C_2$ and $B_2$. The symmetric bi-convex subsets occurring in the former trellis are: 

 \begin{gather*}
O=\emptyset, \ A=\{24\}, \ B=A\cup \{14, 25\}, \ C=B\cup \{15\}, \ D=C\cup \{12, 45\}=\overline{O}, \\
\mbox{and} \ \overline{A},\, \overline{B},\,\overline{C} \ \mbox{(the complements to} \ A, \, B,\, C),
  \end{gather*}

\noindent considering the color set $\{1,2,4,5\}$. In this case, the digraph ${\bf SQ} (4,1)$ is viewed as

$$
\begin{array}{ccccccc}
\overline{C} & \rightarrow & \overline{B} & \Rightarrow & \overline{A} & \rightarrow & \overline{O} \\
  \Uparrow &  &  &  &  &   & \Uparrow \\
O & \rightarrow & A & \Rightarrow & B & \rightarrow & C 
\end{array}
$$

The digraph ${\bf SQ} (5,1)$ (with the color set $\{1,2,3,4,5\}$) is similar, up to replacing the simple arrows by thick (barrel)  ones, and regarding $A$ as $\{23,24,34\}$, and $C$ as $B\cup\{13,15,35\}$.
 \medskip

 \begin{figure}[htb]
\begin{center}
\includegraphics[scale=.5]{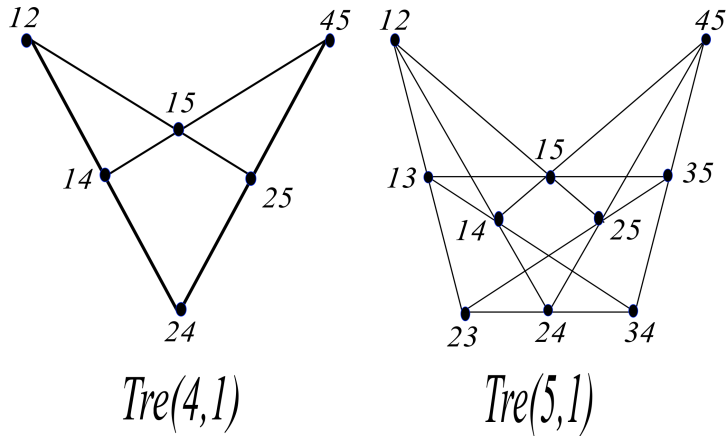}
\end{center}
\vspace{-0.4cm}
\caption{Trellises for ${\bf Q}(4,1)$ and ${\bf Q}(5,1)$.}
 \label{fig9}
\end{figure}

\vspace{-0.2cm}
 \begin{figure}[htb]
\begin{center}
\includegraphics[scale=.25]{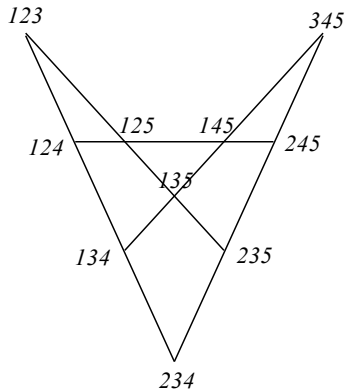}
\end{center}
\vspace{-0.4cm}
\caption{Trellis for ${\bf Q}(5,2)$.}
 \label{fig10}
\end{figure}

\noindent\textbf{Example 5.} Now consider the second Bruhat digraphs of types $C_2$ and $B_2$, that is, ${\bf SQ}(4,2)$ and ${\bf SQ} (5,2)$. The former digraph is pretty simple: it is formed by one thick arrow going from the standard tiling to the anti-standard one. The latter digraph is less trivial. The corresponding Ziegler's trellis is illustrated in Fig.~\ref{fig10}.
Here the  inversion system consists of the following sets (in the increasing order): 
 $$
O=\emptyset, \ A=\{234\}, \ B=A\cup \{134,235\}, \ C=B\cup \{135\}, \ D=C\cup \{124,125,145,245\},
 $$

\noindent and their complements. Then the digraph {\bf SQ} (5,2) is viewed as follows:

$$
\begin{array}{ccccccccc}
  \overline{D} & \Rrightarrow & \overline{C} & \to & \overline{B} & \Rightarrow & \overline{A} & \to & \overline{O} \\
  \Uparrow &  &  &  &  &  &  &  & \Uparrow \\
  O & \to & A & \Rightarrow & B & \to & C & \Rrightarrow & D
\end{array}
$$

\noindent\textbf{Example 6.} For reader's curiosity, we illustrate the digraph ${\bf SQ}(6,3)$ for the symmetric 3-dimensional cubillages with 6 colors, that is, $\textbf{C}(3,3)$. Here, to simplify the picture, we do not distinguish between simple and double arrows.

{\scriptsize
 $$
\begin{array}{ccccccccc}
   &  &  &  & \bullet &  &  &  &  \\
   &  &  & \nearrow &  & \nwarrow &  &  &  \\
   &  & \bullet &  &  &  & \bullet &  &  \\
   &  & \uparrow &  &  &  & \uparrow &  &  \\
   &  & \bullet &  &  &  & \bullet &  &  \\
   & \nearrow &  & \nwarrow &  &  & \uparrow &  &  \\
  \bullet &  &  &  & \bullet &  & \bullet &  &  \\
   & \nwarrow &  & \nearrow &  &  & \uparrow &  &  \\
   &  & \bullet &  &  &  & \bullet &  &  \\
   &  & \uparrow &  &  &  & \uparrow &  &  \\
   &  & \bullet &  &  &  & \bullet &  &  \\
   &  & \uparrow &  &  & \nearrow &  & \nwarrow &  \\
   &  & \bullet &  & \bullet &  &  &  & \bullet \\
   &  & \uparrow &  &  & \nwarrow &  & \nearrow &  \\
   &  & \bullet &  &  &  & \bullet &  &  \\
   &  & \uparrow &  &  &  & \uparrow &  &  \\
   &  & \bullet &  &  &  & \bullet &  &  \\
   &  &  & \nwarrow &  & \nearrow &  &  &  \\
   &  &  &  & \bullet &  &  &  &
\end{array}
$$
}


                        \section{Comparison of B and C}\label{sec8}

The digraphs ${\bf SQ}(2m+1, d)$ and ${\bf SQ}(2m, d)$ (viz. the higher Bruhat digraphs $\textbf{B} (m, d)$ and $\textbf{C} (m,d)$) can be connected by use of a natural mapping of ``reduction of the middle color''. Namely, for a cubillage $\mathcal Q $ of the zonotope $Z(2m+1, d)$, one can reduce (withdraw) the middle color $m+1$ (for details see \cite[Sect.~4]{UMN-19}), which gives a cubillage of the zonotope $Z_{red}\simeq Z(2m, d)$, denoted as $\mathcal Q _{red}$ or $\mathcal Q _0$. On the combinatorial language of inversion systems, this is viewed as follows.
Using a symmetric notation for colors, we write
      \begin{eqnarray}
         \langle 2m \rangle &\mbox{for}& \{-m,\ldots,-1,1,\ldots,m\}, \quad\mbox{and} \nonumber \\
         \langle 2m+1 \rangle &\mbox{for} & \{-m,\ldots,-1,0,1,\ldots,m\}=\langle 2m \rangle\cup \{0\}. \nonumber
          \end{eqnarray}
Then $i^ \circ =-i$, and the middle color 0 is symmetric to itself.

Recall that in a combinatorial setting, the cubillages of the zonotope $Z (n,d)$ are represented as bi-convex subsets of vertices in the (abstract) trellis $Tre ([n], d)$ with the set of vertices $Gr([n], d+1)$ and the set of sticks (hyper-edges)  $Gr ([n], d+2)$. Symmetric cubillages of $Z(2m+1,d)$ form symmetric vertex subsets in $Tre(\langle 2m+1 \rangle,d)$. In its turn, the trellis $Tre(\langle 2m \rangle, d)$ can be naturally realized as a sub-trellis of $Tre (\langle 2m+1 \rangle, d)$ agreeable with the color involution $\circ $. 
See Fig.~\ref{fig9} for an example.

Once $Inv(\mathcal Q )$ is the set of inversions of a symmetric cubillage $\mathcal Q$ of the zonotope $Z(\langle 2m+1 \rangle, d)$, the intersection of $Inv (\mathcal Q )$ with the sub-trellis $Tre (\langle 2m \rangle, d)$ is symmetric and bi-convex, which implies that the set $Inv(\mathcal Q _0)$ of inversions of the reduced cubillage $\mathcal Q _0$ is equal to
                                       $$
                            Inv(\mathcal  Q )\cap Gr(\langle 2m \rangle,d).
                                                 $$
Then the reduction of the middle color 0 gives the mapping
                                                   $$
     red: {\bf SQ}(\langle 2m+1 \rangle, d) \to  {\bf SQ}(\langle 2m \rangle, d).
                                                     $$

\noindent Note that this is not simply a mapping of sets, as in fact, $red$ is consistent with the structure of digraphs, in the sense that each arrow is mapped to an arrow\footnote{Our definition here is slightly different from the standard definition of a digraph morphism; to get a complete correspondence, we should add ``loops'' (edges connecting identical vertices).}.

To illustrate this construction, we exhibit in Fig.~\ref{fig11} the instance $red: {\bf SQ}(\langle 5\rangle, 2) \to {\bf SQ}(\langle 4\rangle, 2)$, using notation as in Example 5 from the previous section.

\[
\begin{array}{ccc}
  \overline{D} & \Rrightarrow & \overline{C} \\
  \Uparrow &  & \downarrow \\
  O &  & \overline{B} \\
  \downarrow &  & \Downarrow \\
  A &  & \overline{A} \\
  \Downarrow &  & \Downarrow \\
  B &  & \overline{O} \\
  \downarrow &  & \Uparrow \\
  C & \Rrightarrow & D \\
   & \downarrow & red \\
  \circ & \Rrightarrow & \bullet
\end{array}
\]

\vspace{-1.2cm}

 \begin{figure}[htb]
\begin{center}
\end{center}
\caption{The mapping $red: {\bf SQ}(\langle 5\rangle, 2) \to {\bf SQ}(\langle 4\rangle, 2)$.}
 \label{fig11}
\end{figure}

\begin{prop}\label{pro4}
 The mapping $red$ is consistent with the structure of  digraphs.
 \end{prop}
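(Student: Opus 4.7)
The plan is to run a case analysis on the three types of symmetric raising flips classified in Section~\ref{sec5}, tracking how the inversion increment $\Delta := Inv(\mathcal Q') \setminus Inv(\mathcal Q)$ transforms under intersection with $Gr(\langle 2m\rangle, d+1)$. By construction, $Inv(red(\mathcal Q)) = Inv(\mathcal Q) \cap Gr(\langle 2m\rangle, d+1)$, so the downstairs change for $red(\mathcal Q) \to red(\mathcal Q')$ is exactly $\Delta \cap Gr(\langle 2m\rangle, d+1)$. The goal is to show this is either empty (producing a loop) or again a symmetric bi-convex increment of one of the three types; by the characterization of symmetric flips from Section~\ref{sec5}, such an increment realizes an arrow of the appropriate sort in $\mathbf{SQ}(\langle 2m\rangle, d)$.

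The recurring tool I would use is the trivial but decisive fact that $0^\circ = 0$: every symmetric subset of $\langle 2m+1 \rangle$ either contains $0$ or is contained in $\langle 2m \rangle$, and for arbitrary $F$ one has $0 \in F \Leftrightarrow 0 \in F^\circ$. With this, the simple and double cases dispatch themselves. If $\Delta = \{F\}$ with $F^\circ = F$, then $0 \in F$ yields an empty downstairs increment (loop), while $0 \notin F$ preserves $\Delta$ and gives a simple flip downstairs. If $\Delta = \{F, F^\circ\}$ with $F \neq F^\circ$, symmetry forces $F$ and $F^\circ$ to agree on containing $0$: either both do, giving a loop, or neither does, giving a double flip downstairs.

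The delicate case is the thick one, $\Delta = Gr(G, d+1)$ with $G^\circ = G$ and $|G| = d+2$. When $0 \notin G$, we have $G \subseteq \langle 2m \rangle$ and the whole stick survives, so the downstairs increment is again a full stick, i.e., a thick flip. When $0 \in G$, however, every $(d+1)$-subset obtained by deleting some $i \in G$ with $i \neq 0$ still contains $0$ and is therefore discarded from $Gr(\langle 2m \rangle, d+1)$; only $G \setminus \{0\}$ survives, and this surviving subset is symmetric because $0$ is the fixed color. Thus the downstairs increment collapses to the singleton $\{G \setminus \{0\}\}$, so the barrel flip upstairs \emph{degenerates} into a simple capsid flip downstairs.

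The main obstacle I would expect is precisely this last subcase: one needs to confirm that the degenerate collapse of a thick flip is realized geometrically as an honest arrow of $\mathbf{SQ}(\langle 2m \rangle, d)$, rather than being a purely combinatorial artifact. This is resolved by appealing to the classification from Section~\ref{sec5}: the type of a symmetric raising flip is determined by the shape of its bi-convex symmetric inversion increment, and a singleton symmetric bi-convex increment necessarily corresponds to a simple capsid flip. Once all three cases are treated this way, $red$ sends every arrow upstairs either to a loop or to an arrow of the appropriate type downstairs, establishing the claimed digraph consistency.
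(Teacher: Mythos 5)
Your proposal is correct and follows essentially the same case analysis as the paper's proof: track the inversion increment of each of the three symmetric flip types under intersection with $Gr(\langle 2m\rangle, d+1)$, using that $0$ is the fixed color of the involution, and identify the surviving increment (empty, singleton, symmetric pair, or full stick) with a loop or an arrow of the corresponding sort. The only difference is that in the simple-flip case the paper additionally shows, via the symmetric stick on the packet $K\cup\{0\}$, that the capsid type must contain $0$, so that a simple flip always collapses to a loop and your ``$0\notin K$'' branch is in fact vacuous; this does not affect the validity of your argument.
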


\begin{proof}
Let $\mathcal Q$ be a cubillage in ${\bf SQ}(\langle 2m+1 \rangle, d)$ and $S:=Inv(\mathcal Q)$. Let $\mathcal Q '$ be obtained from $\mathcal Q $ by a raising flip. Denote the inversion set of $\mathcal Q'$ by $S'$. Flips in $\mathcal Q$ can be of three sorts: simple, double, and barrel ones. Consider these cases.\medskip

\emph{Simple flip.} It deals with some standard capsid which is symmetric; let $K$ be its type. Then $K=K^\circ $ and $Inv(\mathcal Q')=Inv(\mathcal Q )\cup \{K\}$; accordingly, the ``vertex'' $K$ is added in the trellis $Tre (\langle 2m+1 \rangle, d))$. We claim that $0\in K$. For otherwise, we can form the symmetric packet $F=K\cup \{0\}$ and the corresponding symmetric stick $Gr(F, d+1)$ containing the vertex $K$ in the center. The intersection of this stick with the set $Inv(\mathcal Q )$ (which is symmetric and bi-convex) is empty. After adding the vertex $K$, it must remain bi-convex, which is impossible.

So $0\in K$, implying that the size of $K$ (equal to $d+1$) is odd, whence $d$ is even. Then $K$ (as a vertex of $Tre(\langle 2m+1 \rangle, d)$) does not belong to $Tre(\langle 2m \rangle, d)$, and adding $K$ does not change the intersection with $Tre (\langle 2m \rangle, d)$. So $red(\mathcal Q ')=red(\mathcal Q )$.\medskip

\emph{ Double flip.} It is obtained by adding to $Inv(\mathcal Q )$ two (symmetric to each other) vertices $v$ and $v'$ in the trellis $Tre (\langle 2m+1 \rangle, d)$. If they do not belong to $Tre (\langle 2m \rangle, d)$, then nothing is changed under the reduction. And if they (simultaneously) belong to $Tre (\langle 2m \rangle, d)$,  
then the double flip is applicable after the reduction.\medskip

\emph{Barrel flip.} It is obtained by adding to $Inv(\mathcal Q )$  all vertices of a new (barrel) stick $R$. Moreover, since the barrel is symmetric, this stick should also be symmetric and, therefore, ``horizontal''. Two cases are possible, depending on the parity of $d$. If $d$ is even, then 0 is not 
included in the barrel type, so all vertices of  $R$ (as elements of $Gr(\langle 2m+1 \rangle, d)$) are  contained in $\langle 2m \rangle$, that is, $R$ is also a stick in the trellis $Tre (\langle 2m \rangle, d)$. Under the reduction, this stick is added to $Inv(\mathcal Q _{0})$, and we get a barrel flip as well.

And if $d$ is odd, then the color 0 is included in the barrel type. All vertices in the stick $R$ contain the color 0, except for the central vertex. Therefore, the raising barrel flip causes adding this central vertex. This gives a simple flip after the reduction. 
  \end{proof}

 \begin{theorem}\label{theorem1}
The reduction mapping
                                       $$
      red: {\bf SQ}(\langle 2m+1 \rangle,d)\to {\bf SQ}(\langle 2m \rangle,d)
                                         $$
is surjective and all its fibers are connected.
\end{theorem}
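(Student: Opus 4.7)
Given a symmetric cubillage $\mathcal Q_0 \in \mathbf{SQ}(\langle 2m\rangle, d)$ with inversion set $S_0 := Inv(\mathcal Q_0)$, I would construct a symmetric bi-convex extension $S \subseteq Gr(\langle 2m+1\rangle, d+1)$ with $S \cap Gr(\langle 2m\rangle, d+1) = S_0$. For each new packet $F \subseteq \langle 2m+1\rangle$ of size $d+2$ containing $0$ (write $F = F_0 \cup \{0\}$ with $F_0 \in Gr(\langle 2m\rangle, d+1)$), I would pick an initial or final interval of the stick $Gr(F, d+1)$ so that it contains the distinguished vertex $F_0$ exactly when $F_0 \in S_0$, coordinating the choices for the pair $(F, F^\circ)$ to enforce $\circ$-symmetry. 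The bi-convexity of $S_0$ is what allows these local choices to be made globally consistent. Equivalently, in the geometric language of Section~\ref{sec4}: the membrane $\mathcal M_0$ in $Z(\langle 2m\rangle, d+1)$ representing $\mathcal Q_0$ lifts to an $\widehat A$-invariant membrane in $Z(\langle 2m+1\rangle, d+1)$, because the new guide $\xi_0$ corresponding to $t_0 = 0$ on the Veronese curve sits along an axis compatible with the symmetric reflection.

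\textbf{Inner flips.} Fix $\Phi := red^{-1}(\mathcal Q_0)$. From the case analysis in the proof of Proposition~\ref{pro4}, a symmetric flip at $\mathcal Q \in \Phi$ stays inside $\Phi$ iff it only modifies vertices containing the color $0$, i.e.\ changes only the part $S_1 := Inv(\mathcal Q) \setminus S_0$. Concretely: simple flips are always inner (they exist only for $d$ even, and their self-symmetric added type necessarily contains $0$); double flips are inner iff both $\circ$-paired added types contain $0$; barrel flips are never inner, because the central vertex of any barrel stick lies in $Gr(\langle 2m\rangle, d+1)$ and so would perturb $S_0$. Call the resulting sub-digraph on $\Phi$ the \emph{inner digraph}.

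\textbf{Connectedness and main obstacle.} To deduce undirected connectedness of $\Phi$, my plan is to identify a canonical element $\mathcal Q^\star \in \Phi$ — either a minimal symmetric lift (with the smallest $S_1$ consistent with bi-convexity) or a maximal one, depending on the fiber — and to show every $\mathcal Q \in \Phi$ can be connected to $\mathcal Q^\star$ by inner flips. The removal/addition argument follows the extremal-vertex strategy of Propositions~\ref{pro1}-\ref{pro3}: at any $\mathcal Q \ne \mathcal Q^\star$, pick an appropriate extremal vertex $v$ of the symmetric difference $Inv(\mathcal Q) \triangle Inv(\mathcal Q^\star)$ in the trellis $Tre(\langle 2m+1\rangle, d)$ and argue — using the $\circ$-symmetry and bi-convexity of $Inv(\mathcal Q)$ — that either $v$ alone (when $v = v^\circ$) or the pair $\{v, v^\circ\}$ can be added to or removed from $Inv(\mathcal Q)$, giving an inner flip. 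The main obstacle is precisely this extremality step: it is a fiber-relative version of Conjecture~1, still open in general. The absence of barrel flips and the confinement of all modifications to $0$-containing vertices should simplify parts of the analysis, but the rigidity imposed by the fixed $S_0$ on the sticks through a candidate extremal vertex introduces new case work — and, as already visible in the example $\mathbf{SQ}(5,2) \to \mathbf{SQ}(4,2)$, the inner digraph of a fiber need not have a unique source, so one must often reason simultaneously from a canonical source and a canonical sink, switching between the two dual pictures via the involution $\mathcal Q \mapsto \overline{\mathcal Q}$.
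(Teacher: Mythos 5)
Your classification of which symmetric flips stay inside a fiber is correct, but the proposal does not prove the theorem: both halves stop short of the actual argument, and for connectedness you explicitly declare the key step to be ``a fiber-relative version of Conjecture~1, still open in general.'' That is where the proposal and the paper part ways decisively. The paper proves the theorem unconditionally by changing the picture: a symmetric cubillage of $Z(\langle 2m+1\rangle,d)$ is the expansion of a pair $(\mathcal Q_0,\mathcal M)$, where $\mathcal M$ is a symmetric $0$-membrane in the \emph{fixed} cubillage $\mathcal Q_0$, so the fiber over $\mathcal Q_0$ is exactly the set of symmetric $0$-membranes of $\mathcal Q_0$. Membranes inside one fixed cubillage are governed by the partial order $\prec_0$ (immediate $0$-precedence of cubes): a membrane is determined by its stack of cubes, and flops are performed at $\prec_0$-extremal cubes. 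This removes the difficulty you anticipate. For $d$ even, a $\prec_0$-maximal cube $C$ below a symmetric membrane is either self-symmetric (single flop) or disjoint from $C^\circ$, which also lies below (paired flop), so every symmetric membrane descends to the bottom of $Z_0$. For $d$ odd, no cube of $\mathcal Q_0$ is self-symmetric (its type would be a $\circ$-symmetric subset of $\langle 2m\rangle$ of odd size $d$), and the short Claim that a maximal cube $C$ of $sub(\mathcal M)\setminus sub(\mathcal M')$ shares no facet with $C^\circ$ lets one move $\mathcal M$ toward $\mathcal M'$ by a simultaneous lowering/raising pair of flops. None of this requires anything about sources or sinks of the flip digraph on ${\bf SQ}(\langle 2m+1\rangle,d)$, and your observation that fibers may have several sources is precisely why your extremal-vertex strategy on inversion sets would not close.

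Your surjectivity argument is also a gap rather than a proof. Choosing an initial or final interval on each new stick through $0$ so as to agree with $S_0$ at the distinguished vertex is a purely local prescription; Ziegler's condition must hold simultaneously for all packets, and you give no reason the choices can be made coherent (``the bi-convexity of $S_0$ is what allows these local choices to be made globally consistent'' is an assertion, not an argument). The geometric version is genuinely delicate when $d$ is odd: the bottom and roof of $Z_0$ with respect to the direction $\xi_0=e_1$ are then interchanged by the symmetry, so neither is itself a symmetric $0$-membrane, and the paper must squeeze them toward each other by mirror-image flops --- using again that no cube in the gap between them is self-symmetric --- until they meet in a symmetric membrane. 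That existence argument is the real content of surjectivity for $d$ odd and is absent from your proposal.
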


 \begin{proof} 
 The reduction mapping was defined combinatorially. To prove the theorem, it is convenient to use a geometric description of the reduction, which we now give.

Let $ \mathcal Q $ be a cubillage of the zonotope $Z=Z(\langle 2m+1 \rangle, d)$ with the middle color 0. Let $P$ be the pie in $\mathcal Q $ associated with  color 0. 

[For a review in detail on pies, see~\cite[Sect.~3]{UMN-19}. Briefly: $P$ consists of all cubes in $\Qscr$ containing color 0 in their types. Each cube $C$ in $P$ is the Minkowsky sum of some facet $F$ and the segment $[0,\xi_0]$; the union  of these facets gives a $(d-1)$-dimensional subcomplex (forming a ``disk'') $\Mscr$ in $\Qscr$, so that $P$ is viewed as the ``direct product'' of $\Mscr$ and $[0,\xi_0]$.  Compressing (or ``reducing'') $P$ in the direction $\xi _0$ (which eliminates color 0 and turns $P$ into $\Mscr$), we obtain the reduced cubillage $ \mathcal Q _0$; we call $\Mscr$ a membrane in $\mathcal Q _0$ (as well as in $\Qscr$) relative to the direction $\xi _0$, or a \emph{0-membrane}. A general 0-membrane $\Mscr$ in $\Qscr$ has the property that under the projection $\pi_0: \Rset^{[d+1]}\to \Rset^{[d]}$ along $\xi _0$, ~$\mathcal M$ is bijectively mapped to the image $\pi _0 (Z)\simeq Z(\langle 2m \rangle, d-1)$ of the entire zonotope $Z$.  Note that, by the Veronese representation of $ \xi _0$ with $t_0=0$ (as in Sect.~\ref{sec4}), $\xi_0$ is exactly the coordinate vector $e_1$, which for convenience is thought of as being directed in the vertical direction (upward). It follows that the zonotope $Z_0$ is cyclic as well.]

Thus, the cubillage $\mathcal Q $ determines the cubillage $\mathcal Q _0$ and a 0-membrane $\mathcal M $ in the latter. Conversely, given a cubillage $\mathcal Q _0$ of the zonotope $Z_0=Z(\langle 2m \rangle, d)$ and a 0-membrane $\mathcal M $ in it, the ``expansion operation'' w.r.t. $\xi _0$ applied to $\Mscr$ (which expands each facet in $\Mscr$ to the corresponding cube by adding color 0) results in a correct cubillage $\mathcal Q $ of  $Z=Z(\langle 2m+1 \rangle, d)$. These operations are inverse to each other. Moreover, if $\mathcal Q $ is symmetric, then so are the cubillage $\mathcal Q _0$ and the 0-membrane $\mathcal M $ in it. Therefore, ${\bf SQ}(\langle 2m+1 \rangle, d)$ can be identified with the set of pairs $(\mathcal Q _0,\mathcal M)$, where $\mathcal Q _0$ runs over ${\bf SQ} (\langle 2m \rangle, d)$, and $\mathcal M $ over the set of symmetric 0-membranes in $\mathcal Q _0$. In other words, the fiber of the reduction mapping $red$ over $ \mathcal Q _0$ is identified with the set of symmetric 0-membranes in $\mathcal Q _0$.

Now the proof of our theorem falls into two tasks:

(i) to show that every symmetric cubillage $\mathcal Q _0$ of ${\bf SQ}(\langle 2m \rangle, d)$ has a symmetric 0-membrane $\mathcal M $;

(ii) to show that any two symmetric 0-membranes in $\mathcal Q _0$ are connected by a sequence of 0-flops within $\mathcal Q _0$ (where, following terminology in~\cite{UMN-19},  a \emph{flop} means the transformation of one membrane into another corresponding to a flip in the related cubillage of the previous dimension). 

In proving the assertions in (i) and (ii), our reasonings depend on the parity of $d$. Let us look at the cubillage $\mathcal Q _0$ in the direction of $e_1$. This makes visible the ``lower'' side of the boundary of the zonotope $Z_0$ (the ``bottom''). If $d$ is even, then the projection along $e_1$ does not affect the visible side in essence, implying that the bottom forms a symmetric 0-membrane. And similarly for the ``upper'' (invisible) part of the boundary of $Z_0$, the ``roof''. This immediately gives the assertion in~(i) when $d$ is even.

Assuming that $d$ is even, the assertion in (ii) is shown as follows. We use the concept of {\em natural 0-order} on the set of cubes of the cubillage $\mathcal Q _0$. It is defined via the ``shading relation'' (in spirit of~\cite{UMN-19}), but now the shading is regarded relative to the vertical direction. [More precisely, for cubes $C,C'$ in $\mathcal Q _0$, we say that $C$  \emph{immediately 0-precedes} $C'$ if they have a common facet lying in the upper boundary of $C$ (and the lower boundary of $C'$).  One shows that this relation is acyclic, and therefore, it determines a partial order $\prec_0$ on the cubes.]  So for any 0-membrane, we have a set (``stack'') of cubes lying ``below'' the membrane. Now let $ \mathcal M $ be a symmetric 0-membrane, and let $C$ be a maximal (relative to $\preceq _0$) cube in the stack of $\mathcal M $. If $C$ is symmetric, then we can make a ``lowering'' flop in $\mathcal M $. And if the cube $C^\circ $ symmetric to $C$ is different from $C$, then it must lie below $ \mathcal M $ as well, and we can simultaneously make lowering flops in $C$ and $C^ \circ $, getting closer to the ``bottommost'' membrane of $\mathcal Q _0$. This proves the assertion in (ii). Moreover, it follows that a minimal 0-membrane is unique.
\smallskip

Now let us assume that $d$ is odd. In this case, the symmetry turns the bottom of $Z_0$ into the top (``roof''), and vice versa. So these two 0-membranes are not (self)-symmetric, but they are symmetric to each other. And we can make a raising flop of the bottom, and symmetrically, a lowering flop of the top, resulting in two closer 0-membranes symmetric to each other. At a general step of this process, we have a 0-membrane $ \mathcal M $ and its symmetric 0-membrane $\mathcal M '=\mathcal M ^\circ $ such that $\mathcal M $ is located lower than $\mathcal M '$. If they coincide, we obtain a symmetric 0-membrane. If they differ, then the gap between $\mathcal M $ and $\mathcal M '$ is filled with some cubes of the cubillage $\mathcal Q _0$. Let $C'$ be a maximal cube among them (relative to $\prec _0$).

We observe that this cube is not symmetric. For otherwise, its type would be a symmetric (relative to $ \circ $) subset of the odd size $d$ in $\langle 2m \rangle$, which is impossible. So the cube $C$ symmetric to $C'$ is different from $C'$, and the lower side of $C$ is contained in the membrane $\mathcal M$. Then we can make a raising flop in $ \mathcal M $ (relative to the cube $C$) and simultaneously a lowering flop in $\mathcal M '$ (relative to $C'$), which gives a closer pair of symmetric 0-membranes, and the assertion in~(i) follows.

The assertion in~(ii) is obtained in a similar fashion. Let $\mathcal M $ and $\mathcal M '$ be two symmetric 0-membranes. Let $sub(\mathcal M )$ be the set of cubes of $\mathcal Q _0$ lying below $\mathcal M $, and similarly for $\mathcal M '$. If $\mathcal M \ne \mathcal M' $, then there is a cube $C$ contained in $sub (\mathcal M )$ but not in $sub(\mathcal M ')$. Assume that $C$ is maximal in $sub (\mathcal M )- sub (\mathcal M')$. Then its upper boundary lies on $ \mathcal M $. This implies that $\mathcal M $ contains the lower boundary of the symmetric cube $C'$ (which lies above $\Mscr$). Also since $C$ lies above $\mathcal M '$, the cube $C'$ lies below $\mathcal M'$.
 \medskip

\noindent\textbf{Claim.} {\em The cubes $C$ and $C'$ have no facet in common.}\medskip

\noindent Indeed, a common facet $F$ would lie on the upper boundary of  $C$ and on the lower boundary of $C'$ (in view the location of $C$ and $C'$ with respect to $\mathcal M $). Therefore, $C\prec _0 C'$. But then $C'\in sub(\mathcal M ')$  implies $C\in sub(\mathcal M ')$; a contradiction. \hfill$\qed$
\medskip

By the Claim,  in the membrane $\mathcal M $, we can make the lowering flop w.r.t. $C$, and simultaneously the raising flop w.r.t. $C'$. This gives a new symmetric 0-membrane which becomes closer to $ \mathcal M '$. 
  \end{proof}

\noindent\textbf{Definition.} We say that a surjective morphism $\Gamma \to \Delta$ of digraphs is \emph{full} if any edge-arrow in $\Delta$ has its preimage edge-arrow in $\Gamma$.\footnote{This resembles, though slightly differs from, the  concept of full functors in category theory.}\medskip
  
\noindent\textbf{Conjecture 2.} {\em The mapping $red: {\bf SQ}(\langle 2m+1\rangle, d) \to {\bf SQ}(\langle 2m\rangle, d)$ is full.}\medskip

It can be shown that this conjecture is valid for $d=2$. (The case $m=d=2$ is illustrated in Fig.~\ref{fig11}.)


\section{Relation to Bruhat of type A}\label{sec9}

There is a nontrivial relationship between higher Bruhat digraphs of types B-C and those of type A. For simplicity, we describe this for the case of even number of colors,  $n=2m$, and start with the case of $d$ even.

Recall that in Section~\ref{sec4} we introduced the notion of axial space in $ \mathbb R^{[d]}$. This is the subspace generated by the base vectors $e_{d-1}, e_{d-3},\ldots$\,; when $d$ is even, there are exactly $d/2$ such vectors. 
We denote the axial space as $ \mathbb R ^{odd}$. 
Consider its intersection with the (symmetric) zonotope $Z=Z(\langle 2m \rangle, d)$. This is a convex symmetric (w.r.t. the origin) polyhedron; let us denote it as $X$ and call the {\em axis} of the zonotope $Z$.

We are interested in the intersection of  a symmetric cubillage $ \mathcal Q $ and the axis $X$. More precisely, we consider a cubillage $\mathcal Q $ as a polyhedral decomposition (subdivision) of $Z$, and, as before, mean by a \emph{face} $F$ of $\mathcal Q $ a face of some cube in it. Consider  the intersection of this decomposition and the axis $X$. It consists of cells of the form $F\cap X$, where $F$ runs over the faces of $ \mathcal Q $; the resulting polyhedral decomposition of $X$ is denoted by $\mathcal Q \cap X$ or $cor(\mathcal{Q})$ and we call  it the \emph{core} of the cubillage $\mathcal Q$. The main statement of this section is the following

\begin{theorem}\label{theorem2}
For $n,d$ even, the axis $X $ of $Z=Z(n,d)$  is a cyclic zonotope, and for a cubillage $\mathcal Q$ of $Z$,  $cor(\mathcal{Q})$ is a  cubillage of $X$.  
  \end{theorem}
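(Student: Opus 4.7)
The plan is to address the two assertions in turn. First, to identify $X = Z \cap \mathbb R^{odd}$ as a cyclic zonotope of dimension $d/2$. Since $d$ is even, the involution computed in Section~\ref{sec4} gives $A(\xi_i) = \xi_{i^\circ}$, so the orthogonal projection of $\xi_i$ onto $\mathbb R^{odd}$ equals $\eta_i := (\xi_i + \xi_{i^\circ})/2$, and the identity $\eta_i = \eta_{i^\circ}$ yields exactly $m = n/2$ distinct projections. Under the Veronese parametrization with $t_{i^\circ} = -t_i$, each $\eta_i$ has nonzero entries only at odd positions, restricting there to the Vandermonde profile $(1, t_i^2, t_i^4, \ldots, t_i^{d-2})$; reordering the pairs so that the parameters $t_i^2$ are strictly increasing produces a Veronese, hence cyclic, system of $m$ vectors in $\mathbb R^{d/2}$. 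A short symmetrization argument (averaging any $z = \sum_i a_i \xi_i \in Z \cap \mathbb R^{odd}$ against $A$) then gives $X = \sum_{i=1}^{m}[-\eta_i, \eta_i]$, confirming that $X$ is the cyclic zonotope with guiding vectors $2\eta_1,\ldots,2\eta_m$.

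For the second assertion, the central claim is that the $(d/2)$-dimensional cells of $cor(\mathcal Q)$ come only from the symmetric cubes of $\mathcal Q$. If $C \in \mathcal Q$ is symmetric, its type $D$ splits into pairs $\{i_j, i_j^\circ\}_{j=1}^{d/2}$, its centroid $c_C$ is $A$-fixed and hence lies in $X$, and the symmetrization argument applied to the centered cube $C - c_C$ identifies $C \cap X$ with $c_C + \sum_{j=1}^{d/2}[-\eta_{i_j},\eta_{i_j}]$, a translate of a $(d/2)$-dimensional cube in $X$. For a non-symmetric $C$ of type $D$, any $x \in C \cap X$ satisfies $A(x) = x \in A(C) = C^\circ$, so $C \cap X \subseteq C \cap C^\circ$; in the cubillage $\mathcal Q$ this intersection is a common proper face $F$, which is automatically $A$-symmetric and hence of some symmetric type $E \subseteq D \cap D^\circ$. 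Since $D \ne D^\circ$ forces $|D \cap D^\circ| \le d - 2$, we have $|E| \le d - 2$, and the first-part argument applied to $F$ places $F \cap X$ in an affine subspace of dimension $|E|/2 \le d/2 - 1$, strictly less than $d/2$. Combining the two cases, the top-dimensional cells of $cor(\mathcal Q)$ are precisely the cubes $\{C \cap X : C \in \mathcal Q \text{ symmetric}\}$. They cover $X$, because a generic interior point of $X$ lies in the interior of some cube of $\mathcal Q$, which must then be symmetric by the dimension estimate (otherwise $x$ would lie on the proper face $C \cap C^\circ$), and such points are dense in $X$. They meet only in common faces, since for symmetric $C_1, C_2$ the face $F_{12} = C_1 \cap C_2$ is itself symmetric ($A(F_{12}) = A(C_1) \cap A(C_2) = F_{12}$), and the decomposition makes $F_{12} \cap X$ the common face of $C_1 \cap X$ and $C_2 \cap X$.

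The main obstacle will be the dimension estimate in the non-symmetric case: one needs that $C \cap C^\circ$, or more precisely its maximal symmetric sub-face $F$, is a translate of a sub-zonotope whose direction type lies in $D \cap D^\circ$, so that the symmetrization lemma proved in the first step applies verbatim to $F$. This relies on the Minkowski structure of cubes in a cubillage recalled in Section~\ref{sec2} and may warrant a short separate argument. A secondary technicality is the combinatorial labelling of $cor(\mathcal Q)$: after fixing a bijection between $A$-orbits on $[n]$ and the new color set $[m]$ of $X$, the type of $C \cap X$ (for symmetric $C$) is obtained by pushing any transversal $\{i_1,\ldots,i_{d/2}\}$ of the pairing of $D$ through this bijection.
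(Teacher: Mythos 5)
Your proposal is correct and follows essentially the same route as the paper: both identify the guiding vectors of $X$ as $\eta_i=\xi_i+\xi_{i^\circ}$ (up to a factor of $2$), deduce cyclicity from the Veronese form in $s_i=t_i^2$, and show that the top-dimensional cells of $cor(\mathcal Q)$ are exactly the intersections $C\cap X$ over symmetric cubes $C$, each a cube of half the dimension. The paper phrases the key step as ``the minimal face of $\mathcal Q$ containing a point of $X$ is $A$-symmetric'' rather than splitting into symmetric versus non-symmetric cubes, but the underlying symmetrization argument is identical.
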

  
\begin{proof}
The main role in the construction of the complex $cor(\mathcal{Q})=\mathcal Q \cap X$ is played by the symmetric faces of the complex $\mathcal Q $. This is specified as follows. Let $x$ be a point of the axis $X$, and denote by $F(x)$ the minimal face of $ \mathcal Q $ containing $x$. Then $F(x)$ is a symmetric face, that is, $AF(x)=F (x)$, where $A$ is the automorphism defined in Section \ref{sec4}. Indeed, the symmetric face $AF (x)$ also contains $x$, and therefore it coincides with $F (x)$.

This motivates the following question: how does the axis $X$ pass across a symmetric face $F$ of  $ \mathcal Q $ ? We may assume that, up to  an appropriate shifting, the center of $F$ is situated at the origin of coordinates. The face $F$ is a cube of dimension $\le d$ with guiding vectors $ \xi _i$ whose colors $i$ form a symmetric subset $R$ of $\langle 2m \rangle$; in particular, the dimension of $F$ is even. The involution $A$ transfers $\xi _i$ into $\xi _{-i}$. This easily implies that 
$F\cap X$ is a cube with the guiding vectors $\eta _i=\xi _i+\xi _{-i}$; so the dimension of $F\cap X$ is twice smaller than the dimension of $F$.

Thus, we obtain that $X$ is a zonotope with the guiding vectors $\eta _i$, $i=1,\ldots, d/2$, and that the intersection $ \mathcal Q \cap X$ is a cubillage of the axis $X$.  Each $d/2$-dimensional cell of this cubillage is of the form $C\cap X$, where $C$ is a symmetric $d$-dimensional cube of the cubillage $\mathcal Q $.  Assuming that the guiding vectors $ \xi _i$ are given by a symmetric Veronese formula (see Section~\ref{sec4}), the guiding vectors in $X$ are viewed as $\eta _i=(2,0, 2t_i^2,0,\ldots, 2t_i^d, 0)$. Then each $\eta _i$ is the doubled vector $(1, s_i,\ldots, s_i^{d/2})$, where $s_i=t_i^2$. This implies the cyclicity of $X$.
  \end{proof}

\textbf{Example 7.} Figure~\ref{fig12} shows the standard rhombus tiling of the zonogon $Z(6,2)$ and its vertical axis. The right fragment illustrates the same axis with the induced core, which is a 1-dimensional cubillage with the sequence 1,2,3 of colors. In this example, the core corresponds to the standard (natural) linear order 123. This is not simply a coincidence, as is seen from the next proposition.

 \begin{figure}[htb]
\begin{center}
\includegraphics[scale=.4]{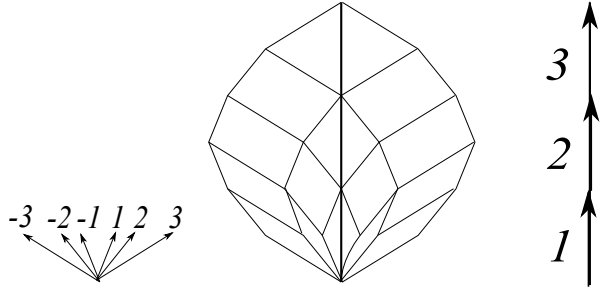}
\end{center}
\caption{The standard tiling of the zonogon $Z (6,2)$ and its core (on the right).}
  \label{fig12}
\end{figure}

\begin{prop}\label{pro6}
If $\mathcal Q $ is the standard cubillage of the zonotope $Z(\langle 2m \rangle, d)$ ($d$ is even), then $cor(\mathcal Q )$ is the standard cubillage of $Z ([m], d/2)$.\end{prop}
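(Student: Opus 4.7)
The plan is to realize both $cor(\mathcal{Q})$ and the standard cubillage of $Z([m],d/2)$ via the same ``lift-and-project'' construction, then identify them.

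First, I recall that $\mathcal{Q}=\pi(\mathcal{M})$, where $\mathcal{M}$ is the front boundary of $Z':=Z(\langle 2m\rangle,d+1)$ and $\pi\colon\mathbb{R}^{d+1}\to\mathbb{R}^{d}$ forgets $e_{d+1}$. Extend $A$ to $\widehat{A}$ on $\mathbb{R}^{d+1}$ by $\widehat{A}(e_{d+1})=e_{d+1}$; its fixed subspace is $U:=\mathbb{R}^{odd}\oplus\mathbb{R} e_{d+1}$, and since $\widehat{A}$ fixes $e_{d+1}$ it preserves both $Z'$ and the front boundary $\mathcal{M}$. Repeating the calculation in the proof of Theorem~\ref{theorem2} one dimension higher, the axis $X':=Z'\cap U$ is a cyclic zonotope of dimension $d/2+1$, with guiding vectors $\eta_i':=\xi_i^{(d+1)}+\xi_{-i}^{(d+1)}$; by the Veronese formula with $t_{-i}=-t_i$, the nonzero entries of $\eta_i'$ (at positions $1,3,\dots,d+1$) form the Veronese vector of length $d/2+1$ in the parameter $s_i:=t_i^2$. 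Hence $X'\cong Z([m],d/2+1)$, and the restriction $\widetilde{\pi}:=\pi|_U\colon X'\to X$ is precisely the projection along the last Veronese coordinate used to define the standard cubillage of $X=Z([m],d/2)$.

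The key geometric observation is the identity $\mathcal{M}\cap X'=\mathcal{M}'$, where $\mathcal{M}'$ denotes the front boundary of $X'$. Indeed, for any $y\in X'$ the entire vertical line $y+\mathbb{R} e_{d+1}$ lies in $U$, so it meets $Z'$ and $X'=Z'\cap U$ in the \emph{same} segment; hence $y$ is topmost in $Z'$ iff topmost in $X'$, i.e., $y\in\mathcal{M}$ iff $y\in\mathcal{M}'$. Combining this with the observation that every $x\in X$ has its unique lift $y\in\mathcal{M}$ automatically in $X'$ (the even coordinates of $y$ vanish because those of $x$ do), I conclude
$$
cor(\mathcal{Q})=\pi(\mathcal{M}\cap X')=\widetilde{\pi}(\mathcal{M}'),
$$
which equals the standard cubillage of $Z([m],d/2)$ by definition.

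The main obstacle is the bookkeeping for cell structures, which I expect to be routine. One needs to verify that a $d$-cube $C\subset\mathcal{Q}$ is symmetric (of type $I\cup(-I)$ for some $I\in\binom{[m]}{d/2}$) iff its lift $\widehat{C}\subset\mathcal{M}$ is $\widehat{A}$-invariant, and that in this case $\widehat{C}\cap X'$ is the $(d/2)$-face of $\mathcal{M}'$ of type $I$. Since $\widehat{A}$ swaps $\xi_i^{(d+1)}\leftrightarrow\xi_{-i}^{(d+1)}$ (this uses $d$ even), the fixed locus of $\widehat{A}|_{\widehat{C}}$ is parametrized by $(b_i)_{i\in I}\in[0,1]^{d/2}$ with guiding vectors $\{\eta_i':i\in I\}$, as required; the Veronese sign cancellations at even coordinates must be checked carefully, but present no real difficulty.
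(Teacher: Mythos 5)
Your argument is correct, but it follows a genuinely different route from the paper's. The paper works combinatorially with \emph{spectra}: it invokes the facts (from \cite{UMN-19}) that a cubillage is determined by its vertex set and that the spectrum of the standard cubillage of $Z(\langle 2m\rangle,d)$ consists of the unions of $d$ intervals $I_1\cup\dots\cup I_d$; a vertex on the axis is $\circ$-invariant, forcing $I_1=-I_d$, $I_2=-I_{d-1},\dots$, so the axial vertices are exactly the unions of $d/2$ intervals in $[m]$, i.e.\ the spectrum of the standard cubillage of $Z([m],d/2)$. You instead work with the geometric definition of ``standard'' as the projection of the front boundary of the lift $Z'=Z(\langle 2m\rangle,d+1)$, extend $A$ to $\widehat A$ fixing $e_{d+1}$, and show that taking fixed loci commutes with taking front boundaries: since the fibre line $y+\Rset e_{d+1}$ lies in $U$ for $y\in X'=Z'\cap U$, the front boundary of $Z'$ meets $U$ exactly in the front boundary of $X'\cong Z([m],d/2+1)$, and the Veronese computation with $s_i=t_i^2$ identifies $\widetilde\pi(\Mscr')$ with the standard cubillage of $X$. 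Both proofs are sound. The paper's is shorter because it outsources the hard content to the known description of spectra and needs no cell-by-cell matching; yours is more self-contained geometrically, extends the computation of Theorem~\ref{theorem2} one dimension up, and yields the stronger structural statement that $cor$ intertwines the membrane lifts in dimensions $d+1$ and $d/2+1$ (which could in principle be used to read off $Inv(cor(\Qscr))$ for non-standard symmetric $\Qscr$ as well). The cell-structure bookkeeping you defer is indeed routine: $\widehat C\cap X'$ is a face of $X'$ because $\widehat C$ is the maximizer set of a linear functional on $Z'$, and its dimension count follows from your observation that $\widehat A$ swaps $\xi_i^{(d+1)}\leftrightarrow\xi_{-i}^{(d+1)}$ when $d$ is even.
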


 \begin{proof}
The easiest way to show this is to examine spectra (vertex sets) of cubillages. One can see that the vertices of the cubillage $cor (\mathcal Q )$ (for any symmetric $ \mathcal Q $, not only for the standard one) are exactly the vertices of $\mathcal Q $ lying on the axis. 

It is known (see e.g.~\cite{UMN-19}) that any cubillage is determined by its spectrum, and that the spectrum of the standard cubillage of  $Z (\langle 2m \rangle,d)$ consists of the subsets of $\langle 2m\rangle$ that are representable as the union of $d$ intervals, $I_1\cup I_2\cup \cdots\cup I_d$. On the other hand, if a vertex of a symmetric cubillage lies on the axis, then it is invariant with respect to the involution $i \mapsto i^\circ =-i$. Therefore, $I_1=-I_d$, $I_2= - I_{d-1}$, and so on. Such a vertex, regarded as a point in the zonotope $Z ([m], d/2)$, has the spectrum consisting of $d/2$ intervals in $[m]$, namely, $I_{d/2+1},\ldots, I_d$. So we get exactly the spectrum of the standard cubillage $Z ([m], d/2)$.
  \end{proof}
  
Theorem \ref{theorem2} shows that for $d$ even there is a mapping
                        $$
                           cor: {\bf SQ}(\langle 2m \rangle,d) \to {\bf Q}(m,d/2),
                          $$
which sends a symmetric cubillage $\mathcal Q $ into the cubillage $\mathcal Q \cap X$ of the axis $X$. However, this is not simply a mapping of set-systems, but this is consistent with the structure of digraphs. Indeed, consider a raising flip in a symmetric cubillage $\mathcal Q $. There are no simple flips when $d$ is even, since there are no symmetric capsids in this case. A double flip in $\mathcal Q $ does not use symmetric cubes of  $\mathcal Q $, so it does not affect the  intersection with $X$ either. It turn, a barrel flip does lead to a simple flip in $\mathcal Q \cap X$, which is a raising one, as can be concluded from Proposition \ref{pro6}.

Thus, we obtain a mapping of the higher Bruhat digraphs (orders)
             $$
                               {\bf C}(m,d) \to  {\bf A}(m,d/2).
               $$
One can show that a similar machinery works (without big changes) in the case of odd number of colors. This gives a mapping of digraphs
                                                                   $$
                           {\bf SQ}(\langle 2m+1 \rangle,d) \to {\bf Q}(\{0\}\cup [m], d/2),
$$
and results in the commutative diagram
            $$
\begin{CD}
{\bf SQ}(\langle 2m+1 \rangle,d) @>>> {\bf Q}(\{0\}\cup [m],d/2)\\
@VVV @VVV \\
{\bf SQ}(\langle 2m \rangle,d) @>>>       {\bf Q}([m],d/2)
\end{CD}
              $$
where the vertical arrows concern reductions of color  0. 
\medskip

\noindent\textbf{Conjecture 3.} {\em The mappings of digraphs $cor: {\bf SQ}(\langle 2m \rangle, d) \to {\bf Q}(m, d/2)$ and $cor: {\bf SQ} (\langle 2m+1 \rangle, d) \to {\bf Q} (m+1, d/2)$ are surjective, have connected fibers, and are full.}\medskip

Arguing in a spirit of the proof of Proposition~\ref{pro3}, one can check that this conjecture is valid for $d=2$.\medskip

So far, we have assumed that the dimension $d$ is even. However, in the case of $d$ odd (but with an even number of colors), it is also possible to define, in a natural way,  a mapping of digraphs
$$
\textbf{SQ}(\langle 2m \rangle, d) \to \textbf{Q}([m], (d-1)/2).
$$
This is done mostly by arguing as above, and we omit a proof here. The axial space is now generated by the base vectors $e_{d-1},\ldots, e_2$ and has the dimension $(d-1)/2$. One shows that the axis $X $ is a cyclic zonotope with the guiding vectors $\theta_i=\xi_i-\xi_ {- i}$, $i=1,\ldots, m$. A symmetric cubillage $\mathcal{Q}$ of $Z(\langle 2m \rangle,d)$ determines the cubillage $\mathcal{Q}\cap X$ in $X$, and the standard cubillage turns into the standard one.


\section{Miscellaneous}\label{sec10}

Due to Manin and Shekhtman \cite{MS},  the Bruhat poset $B (n,d+1)$ (of type A) can be represented as a factorization from the set of maximal chains in $B (n, d)$. It is natural to ask a similar question for the Bruhat of types  $B$ and $C$. However, in this way, we encounter some difficulties.

First, since Conjecture 1 (in Sect.~\ref{sec5}) has not been verified in general, we do not know whether any maximal chain of symmetric cubillages in $Z(n,d)$ begins with the standard cubillage.

Second, even if a maximal chain concerning $Z(n, d)$ begins with the standard cubillage and ends with the anti-standard one, in view of the presence of barrel flip arrows, the chain of corresponding membranes in the zonotope $Z(n, d+1)$ need not determine exactly a cubillage, but merely a cubillage with possible ``barrel holes''.

These two troubles do not happen if $n$ is even and $d$ is odd. However, even in this case, there is a third (and the main) obstacle: a maximum chain of symmetric cubillages in ${\bf SQ}(n,d)$, being represented as a chain of membranes in $Z(n, d+1)$, though determines a cubillage of  $Z (n, d+1)$, but merely a \emph{skew-symmetric} cubillage.

The latter difficulty can partially be circumvented by observing that (for $n,d$ even) the sets of symmetric and skew-symmetric cubillages of the zonotope $Z (n,d)$ are bijective to each other. Such a bijection is described in~\cite[Sect.~9]{DKK3} explicitly in terms of spectra (vertex sets) of cubillages. An alternative description is as follows. 

In Section~\ref{sec4}, we introduced the diagonal automorphism $A$ of the space ${\mathbb R}^{[d]}$. This automorphism multiplies the base vector $e_j$ by $(-1)^{d+j-1}$; when $d$ is even (that we now assume) the multiplier is $(-1)^{j-1}$. The fixed (axial) space is generated by the vectors $e_{d-1}, e_{d-3},\ldots, e_1$, and the orthogonal one by the vectors $e_d,\ldots, e_2$.

Let us now consider the other automorphism $D$ of the space ${\mathbb R}^{[d]}$ that transfers each base vector $e_j$ to $e_{d+1-j}$. In particular, $e_d$ is transferred to $e_1$, and $e_1$ to $e_d$. This transforms the symmetric cyclic zonotope $Z$ (generated by vectors $\xi _i$ lying on the Veronese curve) into the zonotope $DZ$ generated by the vectors $D\xi _i$. Such a $D\xi _i$ is of the form $(t_i^{d-1},\ldots, t_i, 1)$ and does not lie on the Veronese curve. However, if we divide it by $t_i^{d-1}$ (using the fact that $n$ is even, and therefore all $t_i$'s are nonzero), we obtain the vector $(1,1/t_i,\ldots, (1/t_i)^{d-1})$ already lying on the Veronese curve.

Of course, we should take into account that the points $t_i$ are modified when coming to $1/t_i$. (As before, we assume that $t_i=-t_{n+1-i}$, that is, that the points $t_i$ are located ``symmetrically'' relative to 0.) In particular, the order of colors changes (though this is not so important). An important fact is that the new zonotope $DZ$ is again cyclic and symmetric. And any cubillage $\mathcal Q $ of  $Z$ gives a cubillage $D\mathcal Q $ of $DZ$.

\begin{prop} \label{pro7} 
If $d$ is even, then the map $D$ transforms symmetric cubillages of $Z$ into skew-symmetric ones, and transforms skew-symmetric cubillages into symmetric ones.
\end{prop}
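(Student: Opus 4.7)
The plan is to reduce the proposition to a single algebraic fact: the automorphisms $A$ and $D$ anticommute when $d$ is even, i.e., $DAD^{-1}=-A$. Granted this, by Lemma~\ref{lemma1} (which says symmetry is $A$-invariance) and Remark~2 (which says skew-symmetry is $(-A)$-invariance), conjugation by $D$ must swap the two conditions.

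First I would check that $A$ preserves the zonotope $DZ$, so that $A$ acts on cubillages of $DZ$. From Section~\ref{sec4} we have $A\xi_i=(-1)^d\xi_{i^\circ}=\xi_{i^\circ}$ (for $d$ even), so combined with $AD=-DA$ this gives $AD\xi_i=-D\xi_{i^\circ}$; hence the signed multiset of guiding vectors of $DZ$ is preserved by $A$, and $A(DZ)=DZ$. Moreover, after the rescaling by $1/t_i^{d-1}$ described before the proposition, $DZ$ is again a symmetric cyclic Veronese zonotope, and the formula $A(e_j)=(-1)^{d+j-1}e_j$ depends only on the dimension $d$, so Lemma~\ref{lemma1} and Remark~2 apply equally to cubillages of $DZ$ with the same automorphism $A$.

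The key algebraic step is a direct computation. Since $D$ is an involution, $D^{-1}=D$, and for $d$ even one computes
\[
DAD(e_j)=D\bigl((-1)^{d+(d+1-j)-1}e_{d+1-j}\bigr)=(-1)^{2d-j}e_j=(-1)^j e_j=-(-1)^{j-1}e_j=-A(e_j),
\]
whence $DAD^{-1}=-A$. Geometrically, this expresses the fact that the swap $D\colon e_j\leftrightarrow e_{d+1-j}$ interchanges the axial eigenspace of $A$ (spanned by $e_{d-1},e_{d-3},\ldots,e_1$) with its orthogonal one (spanned by $e_d,e_{d-2},\ldots,e_2$) precisely when $d$ is even.

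Concluding, if $\mathcal{Q}$ is symmetric, i.e., $A\mathcal{Q}=\mathcal{Q}$, then
\[
A(D\mathcal{Q})=(AD)\mathcal{Q}=(-DA)\mathcal{Q}=-D(A\mathcal{Q})=-D\mathcal{Q},
\]
so $D\mathcal{Q}$ is skew-symmetric in $DZ$; the converse direction is identical, starting from $A\mathcal{Q}=-\mathcal{Q}$ for skew-symmetric $\mathcal{Q}$ and producing $A(D\mathcal{Q})=D\mathcal{Q}$. The main (rather minor) obstacle is bookkeeping: one must be comfortable that the rescalings and possible color relabelings incurred in identifying $DZ$ with a standard symmetric cyclic Veronese zonotope are compatible with the involution $\circ$ (because $s_i=-s_{n+1-i}$), so they do not disturb the combinatorial notions of symmetric and skew-symmetric cubillage.
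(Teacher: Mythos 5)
Your proposal is correct and follows essentially the same route as the paper: the key fact in both is the anticommutation $AD=-DA$ for $d$ even (your computation $DAD^{-1}=-A$ is the same identity), followed by the one-line deduction $A(D\mathcal{Q})=-DA\mathcal{Q}=-D\mathcal{Q}$. The extra checks you include (that $A$ preserves $DZ$ and that the rescaling to the Veronese form respects the involution) are sensible bookkeeping that the paper handles in the discussion preceding the proposition.
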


 \begin{proof} 
Observe that the automorphisms $A$ and $D$ anti-commute, $AD= - DA$. Indeed, $ADe_j=Ae_{d+1-j}=(-1)^{d+(d+1-j)-1}e_{d+1-j}=(-1)^je_{d+1-j}$, whereas $DAe_j=D(-1)^{j-1}e_j=(-1)^{j-1}e_{d+1-j}$. Therefore, if a cubillage $ \mathcal Q $ is symmetric, that is, $A\mathcal Q =\mathcal Q $, then
         $$
               A(D\mathcal Q )=-DA\mathcal Q =-D\mathcal Q ,
           $$
and therefore $D\mathcal Q $ is skew-symmetric. The second assertion is shown similarly. 
 \end{proof}

As a consequence,
 
 \begin{numitem1}
for $n$ even and $d$ odd, a maximum chain in ${\bf SQ} (n,d)$ gives a symmetric cubillage in the zonotope $Z (n, d+1)$. 
 \end{numitem1}
 
It is important to emphasize that in this way, we produce all symmetric cubillages of $Z (n, d+1)$. In its turn, applying $D$, we get a skew-symmetric cubillage of  $Z (n, d+1)$.  Note that the chain of symmetric membranes in $Z (n, d+1)$ is actually constructed as in the proof of Proposition \ref{pro1}.
 \medskip

Finally, if $d$ is odd, then $D$ preserves the axis and commutes with $A$. Therefore, for $n$ even and $d$ odd, using $D$, we transform symmetric cubillages into symmetric ones (they concern the different zonotope $DZ$, but using their spectra, we can construct their counterparts in the original zonotope, thus getting one more involution on symmetric cubillages of odd dimensions).

\renewcommand{\refname}{References}

      \end{document}